\newtheorem{thm}{Theorem} [section]
\newtheorem{lemma}[thm]{Lemma}
\newtheorem{corollary}[thm]{Corollary}
\newtheorem{prop}[thm]{Proposition}
\newtheorem{assumption}[thm]{Assumption}
\newtheorem*{rough-thm-1}{Rough Version of Vanishing Theorem}
\newtheorem*{rough-thm-2}{Rough Version of Exactness Theorem}
\newtheorem*{main example}{Main Example}
\theoremstyle{definition}
\newtheorem*{basic convention}{Basic Conventions}
\newtheorem{convention}[thm]{Convention}
\theoremstyle{remark}
\newtheorem{remark}[thm]{Remark}
\begin{document}

\numberwithin{equation}{section}

\newcommand{\hs}{\mbox{\hspace{.4em}}}
\newcommand{\ds}{\displaystyle}
\newcommand{\bd}{\begin{displaymath}}
\newcommand{\ed}{\end{displaymath}}
\newcommand{\bcd}{\begin{CD}}
\newcommand{\ecd}{\end{CD}}

\newcommand{\proj}{\operatorname{Proj}}
\newcommand{\bproj}{\underline{\operatorname{Proj}}}
\newcommand{\spec}{\operatorname{Spec}}
\newcommand{\bspec}{\underline{\operatorname{Spec}}}
\newcommand{\pline}{{\mathbf P} ^1}
\newcommand{\pplane}{{\mathbf P}^2}
\newcommand{\coker}{{\operatorname{coker}}}
\newcommand{\ldb}{[[}
\newcommand{\rdb}{]]}

\newcommand{\Sym}{\operatorname{Sym}^{\bullet}}
\newcommand{\Symp}{\operatorname{Sym}}
\newcommand{\Pic}{\operatorname{Pic}}
\newcommand{\AAut}{\operatorname{Aut}}
\newcommand{\PAut}{\operatorname{PAut}}

\newcommand{\too}{\twoheadrightarrow}
\newcommand{\C}{{\mathbf C}}
\newcommand{\cA}{{\mathcal A}}
\newcommand{\cS}{{\mathcal S}}
\newcommand{\cV}{{\mathcal V}}
\newcommand{\cM}{{\mathcal M}}
\newcommand{\bA}{{\mathbf A}}
\newcommand{\aline}{\mathbb{A}^1}
\newcommand{\cB}{{\mathcal B}}
\newcommand{\cC}{{\mathcal C}}
\newcommand{\cD}{{\mathcal D}}
\newcommand{\D}{{\mathcal D}}
\newcommand{\cs}{{\mathbf C} ^*}
\newcommand{\boldc}{{\mathbf C}}
\newcommand{\cE}{{\mathcal E}}
\newcommand{\cF}{{\mathcal F}}
\newcommand{\cG}{{\mathcal G}}
\newcommand{\G}{{\mathbf G}}
\newcommand{\fg}{{\mathfrak g}}
\newcommand{\ft}{\mathfrak t}
\newcommand{\bH}{{\mathbf H}}
\newcommand{\cH}{{\mathcal H}}
\newcommand{\cI}{{\mathcal I}}
\newcommand{\cJ}{{\mathcal J}}
\newcommand{\cK}{{\mathcal K}}
\newcommand{\cL}{{\mathcal L}}
\newcommand{\baL}{{\overline{\mathcal L}}}
\newcommand{\M}{{\mathcal M}}
\newcommand{\bM}{{\mathbf M}}
\newcommand{\bm}{{\mathbf m}}
\newcommand{\cN}{{\mathcal N}}
\newcommand{\theo}{\mathcal{O}}
\newcommand{\cP}{{\mathcal P}}
\newcommand{\cR}{{\mathcal R}}
\newcommand{\boldp}{{\mathbf P}}
\newcommand{\boldq}{{\mathbf Q}}
\newcommand{\bbL}{{\mathbf L}}
\newcommand{\cQ}{{\mathcal Q}}
\newcommand{\cO}{{\mathcal O}}
\newcommand{\Oo}{{\mathcal O}}
\newcommand{\OX}{{\Oo_X}}
\newcommand{\OY}{{\Oo_Y}}
\newcommand{\otY}{{\underset{\OY}{\ot}}}
\newcommand{\otX}{{\underset{\OX}{\ot}}}
\newcommand{\cU}{{\mathcal U}}
\newcommand{\cX}{{\mathcal X}}
\newcommand{\cW}{{\mathcal W}}
\newcommand{\boldz}{{\mathbf Z}}
\newcommand{\cZ}{{\mathcal Z}}
\newcommand{\qgr}{\operatorname{qgr}}
\newcommand{\gr}{\operatorname{gr}}
\newcommand{\coh}{\operatorname{coh}}
\newcommand{\End}{\operatorname{End}}
\newcommand{\Hom}{\operatorname{Hom}}
\newcommand{\uHom}{\underline{\operatorname{Hom}}}
\newcommand{\uHomY}{\uHom_{\OY}}
\newcommand{\uHomX}{\uHom_{\OX}}
\newcommand{\Ext}{\operatorname{Ext}}
\newcommand{\bExt}{\operatorname{\bf{Ext}}}
\newcommand{\Tor}{\operatorname{Tor}}

\newcommand{\inv}{^{-1}}
\newcommand{\airtilde}{\widetilde{\hspace{.5em}}}
\newcommand{\airhat}{\widehat{\hspace{.5em}}}
\newcommand{\nt}{^{\circ}}
\newcommand{\del}{\partial}

\newcommand{\supp}{\operatorname{supp}}
\newcommand{\GK}{\operatorname{GK-dim}}
\newcommand{\hd}{\operatorname{hd}}
\newcommand{\id}{\operatorname{id}}
\newcommand{\res}{\operatorname{res}}
\newcommand{\lrar}{\leadsto}
\newcommand{\im}{\operatorname{Im}}
\newcommand{\HH}{\operatorname{H}}
\newcommand{\TF}{\operatorname{TF}}
\newcommand{\Bun}{\operatorname{Bun}}
\newcommand{\Hilb}{\operatorname{Hilb}}
\newcommand{\Fact}{\operatorname{Fact}}
\newcommand{\F}{\mathcal{F}}
\newcommand{\nthord}{^{(n)}}
\newcommand{\Aut}{\underline{\operatorname{Aut}}}
\newcommand{\Gr}{\operatorname{Gr}}
\newcommand{\Fr}{\operatorname{Fr}}
\newcommand{\GL}{\operatorname{GL}}
\newcommand{\gl}{\mathfrak{gl}}
\newcommand{\SL}{\operatorname{SL}}
\newcommand{\ff}{\footnote}
\newcommand{\ot}{\otimes}
\def\Ext{\operatorname {Ext}}
\def\Hom{\operatorname {Hom}}
\def\Ind{\operatorname {Ind}}
\def\bbZ{{\mathbb Z}}

\newcommand{\nc}{\newcommand}
\newcommand{\on}{\operatorname}
\nc{\cont}{\on{cont}}
\nc{\rmod}{\on{mod}}
\nc{\Mtil}{\widetilde{M}}
\nc{\wb}{\overline}
\nc{\wt}{\widetilde}
\nc{\wh}{\widehat}
\nc{\sm}{\setminus}
\nc{\mc}{\mathcal}
\nc{\mbb}{\mathbb}
\nc{\Mbar}{\wb{M}}
\nc{\Nbar}{\wb{N}}
\nc{\Mhat}{\wh{M}}
\nc{\pihat}{\wh{\pi}}
\nc{\JYX}{\cJ_{Y\leftarrow X}}
\nc{\phitil}{\wt{\phi}}
\nc{\Qbar}{\wb{Q}}
\nc{\DYX}{\D_{Y\leftarrow X}}
\nc{\DXY}{\D_{X\to Y}}
\nc{\dR}{\stackrel{\bbL}{\underset{\D_X}{\ot}}}
\nc{\Winfi}{\cW_{1+\infty}}
\nc{\K}{{\mc K}}
\nc{\unit}{{\bf \on{unit}}}
\nc{\boxt}{\boxtimes}
\nc{\xarr}{\stackrel{\rightarrow}{x}}
\nc{\Cnatbar}{\overline{C}^{\natural}}
\nc{\oJac}{\overline{\on{Jac}}}
\nc{\gm}{{\mathbf G}_m}
\nc{\Loc}{\on{Loc}}
\nc{\Bm}{\operatorname{Bimod}}
\nc{\lie}{{\mathfrak g}}
\nc{\lb}{{\mathfrak b}}
\nc{\lien}{{\mathfrak n}}
\nc{\e}{\epsilon}
\nc{\eu}{\mathsf{eu}}

\nc{\Gm}{{\mathbb G}_m}
\nc{\Gabar}{\wb{\G}_a}
\nc{\Gmbar}{\wb{\G}_m}
\nc{\PD}{{\mathbb P}_{\D}}
\nc{\Pbul}{P_{\bullet}}
\nc{\PDl}{{\mathbb P}_{\D(\lambda)}}
\nc{\PLoc}{\mathsf{MLoc}}
\nc{\Tors}{\on{Tors}}
\nc{\PS}{{\mathsf{PS}}}
\nc{\PB}{{\mathsf{MB}}}
\nc{\Pb}{{\underline{\operatorname{MBun}}}}
\nc{\Ht}{\mathsf{H}}
\nc{\bbH}{\mathbb H}
\nc{\gen}{^\circ}
\nc{\Jac}{\operatorname{Jac}}
\nc{\sP}{\mathsf{P}}
\nc{\sT}{\mathsf{T}}
\nc{\bP}{{\mathbb P}}
\nc{\otc}{^{\otimes c}}
\nc{\Det}{\mathsf{det}}
\nc{\PL}{\on{ML}}
\nc{\sL}{\mathsf{L}}

\nc{\ml}{{\mathcal S}}
\nc{\Xc}{X_{\on{con}}}
\nc{\Z}{{\mathbb Z}}
\nc{\resol}{\mathfrak{X}}
\nc{\map}{\mathsf{f}}
\nc{\gK}{\mathbb{K}}
\nc{\bigvar}{\mathsf{W}}
\nc{\Tmax}{\mathsf{T}^{md}}

\nc{\Cpt}{\mathbb{P}}
\nc{\pv}{e}

\nc{\Qgtr}{Q^{\on{gtr}}}
\nc{\algtr}{\alpha^{\on{gtr}}}
\nc{\Ggtr}{\mathbb{G}^{\on{gtr}}}
\nc{\chigtr}{\chi^{\on{gtr}}}

\newcommand{\la}{\langle}
\newcommand{\ra}{\rangle}
\newcommand{\fm}{\mathfrak m}
\newcommand{\Ms}{\mathfrak M}
\newcommand{\Ma}{\mathfrak M_0}
\newcommand{\ML}{\mathfrak L}
\newcommand{\ev}{\textit{ev}}

\nc{\EF}{\Phi}

\nc{\Cs}{{\mathbb C}^*}

\numberwithin{equation}{section}

\title{Counterexamples to Hyperk\"ahler Kirwan Surjectivity}

\author{Kevin McGerty}
\address{Mathematical Institute\\University of Oxford\\Oxford OX1 3LB, UK}
\email{mcgerty@maths.ox.ac.uk}
\author{Thomas Nevins}
\address{Department of Mathematics\\University of Illinois at Urbana-Champaign\\Urbana, IL 61801 USA}
\email{nevins@illinois.edu}

\begin{abstract}
Suppose that $\mathsf{M}$ is a complete hyperk\"ahler manifold with a compact Lie group $K$ acting via hyperk\"ahler isometries and with hyperk\"ahler moment map $(\mu_{\mathbb{C}}, \mu_{\mathbb{R}}): \mathsf{M}\rightarrow \mathfrak{k}^*\otimes\on{Im}(\mathbb{H})$.   It is a long-standing problem to determine when the hyperk\"ahler Kirwan map
\bd
H^*_K(\mathsf{M},\mathbb{Q}) \longrightarrow H^*(\mathsf{M}/\!\!/\!\!/ K, \mathbb{Q})
\ed 
is surjective.  We show that for each $n\geq 2$, the natural $U(n)$-action on $T^*(SL_n\times\mathbb{C}^n)$ admits a hyperk\"ahler quotient for which the hyperk\"ahler Kirwan map fails to be surjective.   As a tool, we establish a ``K\"ahler $=$ GIT quotient'' assertion for products of cotangent bundles of reductive groups, equipped with the Kronheimer metric, and representations.
\end{abstract}

\maketitle

\section{Introduction}
Suppose that $\mathsf{M}$ is a complete hyperk\"ahler manifold with a compact Lie group $K$ acting via hyperk\"ahler isometries and a corresponding hyperk\"ahler moment map 
\bd
(\mu_{\mathbb C}, \mu_{\mathbb R}): \mathsf{M}\longrightarrow (\mathfrak{k}^*)\otimes \on{Im}(\mathbb{H}).
\ed
 Choosing $\xi\in Z(\mathfrak{k}^*)$ sufficiently generic, we assume that $K$ acts quasi-freely (i.e., with finite stabilizers) on $\mu_{\mathbb{C}}\inv(0)\cap\mu_{\mathbb{R}}\inv(\xi)$ making
 \bd
 \mathsf{M}/\!\!/\!\!/ K := \mu_{\mathbb{C}}\inv(0)\cap\mu_{\mathbb{R}}\inv(\xi)/K
 \ed
 a smooth hyperk\"ahler orbifold.  The {\em hyperk\"ahler Kirwan map} is the restriction map,
\begin{equation}\label{eq:HKK map}
\xymatrix{
H^*_K(\mathsf{M},\mathbb{Q}) \ar[r]^{\hspace{-3.5em}\kappa} &  H^*_K(\mu_{\mathbb{C}}\inv(0)\cap\mu_{\mathbb{R}}\inv(\xi), \mathbb{Q}) \ar[r]^{\hspace{2em}\cong} & H^*(M /\!\!/\!\!/ K, \mathbb{Q}).
}
\end{equation}
It is a long-standing question when \eqref{eq:HKK map} is surjective.  
The map \eqref{eq:HKK map} is known (cf. \cite{Konno, JKK, FR}, among others) to be surjective for some classes of examples including quiver varieties \cite{McN-HKK}; it is known not to be surjective for some hyperk\"ahler quotients of infinite-dimensional vector spaces (cf. \cite{Hitchin, Hausel, DWWW, CNS}).

\vspace{.3em}

The present paper exhibits an infinite list of examples for which $\mathsf{M}$ is a (finite-dimensional) affine algebraic variety (in complex structure $I$), $G$ is a unitary group, and \eqref{eq:HKK map} is {\em not} surjective.  Our counterexamples are built via familiar methods in complex algebraic geometry, and indeed are closely related to manifolds for which \eqref{eq:HKK map} is known (via \cite{Vasserot}) to be surjective.  

\vspace{.3em}

In fact, the paper combines two largely independent parts to achieve this goal.

\vspace{.3em}

First, let $G_1,\dots, G_n$ be a finite collection of complex reductive groups and let $\mathbb{G} = \prod_i G_i$.  Then $\mathbb{G}^2$ acts on $\mathbb{G}$ via the product of left and right actions (which we call the ``left-right action''); write $\mathbb{K}\subset\mathbb{G}$ for a choice of maximal compact subgroup.  Let $G\subseteq\mathbb{G}^2$ be a reductive subgroup acting on $\mathbb{G}$ via the left-right actions and with maximal compact subgroup $K = \mathbb{K}\cap G$.  Let $V$ be a finite-dimensional representation of $G$ with $K$-invariant Hermitian metric and let $\mathsf{M} = T^*\mathbb{G}\times T^*V$.    Thanks to work of Kronheimer  \cite{Kronheimer} (see also \cite{DancerSwann} for a clear exposition), it is known that $T^*\mathbb{G}$ admits a complete, $\mathbb{K}$-invariant  hyperk\"ahler metric (which we call the ``Kronheimer metric''), and hence $K$ acts on $\mathsf{M}$ by hyperk\"ahler isometries.  Because $\mathsf{M}$ is a cotangent bundle, we obtain canonical complex and real moment maps 
$(\mu_{\mathbb C}, \mu_{\mathbb R}): \mathsf{M} \longrightarrow \mathfrak{g}^*\times \mathfrak{k}^*.$
Choose a character $\chi: G\rightarrow \Gm=\mathbb{C}^*$ and let $\lambda = d\chi:\mathfrak{g}\rightarrow\mathbb{C}$, yielding $-i\lambda \in \Hom(\mathfrak{k},\mathbb{R}) = \mathfrak{k}^*$.  
\begin{thm}[Theorem \ref{kempf-ness thm}]\label{kempf-ness intro}
The GIT and hyperk\"ahler quotients of $\mathsf{M}$ are isomorphic: that is, 
\bd
\mu_{\mathbb{C}}\inv(0)\cap\mu_{\mathbb{R}}\inv(-i\lambda)/K \cong \mu_{\mathbb{C}}\inv(0)/\!\!/_{\chi} G,
\ed
where the right-hand side is the GIT quotient of $\mu_{\mathbb{C}}\inv(0)$ with respect to the character $\chi$.
\end{thm}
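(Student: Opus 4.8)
The plan is to identify both sides with a common ``reduced'' space via a Kempf--Ness theorem, with the real work being the verification of that theorem's analytic hypotheses for the Kronheimer metric. Work throughout in complex structure $I$, in which, as recalled above, $\mathsf{M}=T^*\mathbb{G}\times T^*V$ is a smooth affine variety carrying an algebraic $G$-action; then the complex moment map $\mu_{\mathbb{C}}\colon\mathsf{M}\to\mathfrak{g}^*$ is a $G$-equivariant polynomial map (it is the canonical cotangent moment map, holomorphic for $I$ and fibrewise linear), while $\mu_{\mathbb{R}}$ is a moment map for the $K$-action with respect to the K\"ahler form $\omega_I$. Consequently $X:=\mu_{\mathbb{C}}\inv(0)$ is an affine $G$-variety, $\mu_{\mathbb{R}}|_X$ is a moment map for $K$ on $(X,\omega_I|_X)$, and by definition the right-hand side is $\mu_{\mathbb{C}}\inv(0)/\!\!/_{\chi}G=\proj\bigoplus_{k\ge 0}\{f\in\mathbb{C}[X]:f(g\cdot x)=\chi(g)^{k}f(x)\}$. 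Under the genericity hypothesis on $\xi=-i\lambda$ the $\chi$-semistable and $\chi$-stable loci of $X$ coincide, $G$ acts on this locus with finite stabilizers, and both quotients in the statement are smooth hyperk\"ahler orbifolds; so it is enough to show that $\mu_{\mathbb{R}}\inv(-i\lambda)\cap X$ is contained in the $\chi$-semistable locus and that the induced continuous map $\bigl(\mu_{\mathbb{R}}\inv(-i\lambda)\cap X\bigr)/K\to\mu_{\mathbb{C}}\inv(0)/\!\!/_{\chi}G$ is a bijection --- hence a homeomorphism, hence, by smoothness, an isomorphism of orbifolds.

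The core is then a Kempf--Ness dictionary on the affine $G$-variety $X$: (i) $x\in X$ is $\chi$-semistable iff $\overline{G\cdot x}\cap\mu_{\mathbb{R}}\inv(-i\lambda)\neq\varnothing$ and $\chi$-polystable iff $G\cdot x\cap\mu_{\mathbb{R}}\inv(-i\lambda)\neq\varnothing$; (ii) for $\chi$-polystable $x$ the set $G\cdot x\cap\mu_{\mathbb{R}}\inv(-i\lambda)$ is a single $K$-orbit; and (iii) therefore each point of $\mu_{\mathbb{C}}\inv(0)/\!\!/_{\chi}G$ is the image of exactly one $K$-orbit of the level set, which gives the required bijection. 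The standard mechanism is the \emph{Kempf--Ness function}: for fixed $x\in X$ one integrates $\langle\mu_{\mathbb{R}}+i\lambda,\,\cdot\,\rangle$ along the $G$-orbit to obtain a function $\psi_x$ on $G/K$ that is convex along geodesics, whose critical set is precisely the collection of $K$-orbits in $G\cdot x\cap\mu_{\mathbb{R}}\inv(-i\lambda)$, and whose behaviour along a geodesic ray $t\mapsto\exp(it\eta)K$ is governed by the Hilbert--Mumford weight of the one-parameter subgroup $\eta$ for the linearization $\chi$ together with the limit $\lim_{t\to\infty}\exp(it\eta)\cdot x$ inside $X$. Granting that the negative gradient flow of $\psi_x$ --- equivalently, of $\|\mu_{\mathbb{R}}+i\lambda\|^2$ on $X$ --- always converges, statements (i)--(iii) follow in the usual way, analytic (in)stability being matched to the Hilbert--Mumford criterion via these asymptotics and the closed orbit inside each $\overline{G\cdot x}$ supplying the point of $\mu_{\mathbb{R}}\inv(-i\lambda)$.

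The convergence statement is the crux, and the step I expect to require genuine work rather than a citation, because $X$ is noncompact and on the $T^*\mathbb{G}$ factor the metric is the Kronheimer metric rather than a flat one, so King's theorem for linear actions on vector spaces does not apply directly (it does, however, cover the flat factor $T^*V$ with its linear $G$-action). I would proceed in three stages. First, completeness of the Kronheimer metric \cite{Kronheimer} (see also \cite{DancerSwann}) together with flatness of $T^*V$ shows that $\mathsf{M}$, hence $X$, is a complete K\"ahler manifold in structure $I$, so that the gradient trajectories of $\psi_x$ exist for all time. Second --- and this is the heart --- using the explicit model and the asymptotics of $(T^*\mathbb{G},\mu_{\mathbb{C}},\mu_{\mathbb{R}})$ from \cite{Kronheimer,DancerSwann}, I would control $\psi_x$ near infinity: a trajectory that fails to converge in $X$ must escape along a path asymptotic to a one-parameter subgroup of $G$, yielding an algebraic destabilization of $x$, so that the Kirwan--Ness analytic stratification of $X$ coincides with the Hesselink--Kempf GIT instability stratification. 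Third, on the bounded pieces thereby isolated a {\L}ojasiewicz-type argument (using real-analyticity of the Kronheimer metric) promotes boundedness of a trajectory to genuine convergence. It is precisely the non-flatness of the Kronheimer metric, and the absence of a finite-dimensional linear hyperk\"ahler-quotient presentation of $T^*\mathbb{G}$, that forbids simply quoting King's criterion and that forces one to carry out the second stage by hand, from Kronheimer's construction and the structure of $T^*\mathbb{G}$ at infinity.
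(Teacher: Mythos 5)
You set up the same standard Kempf--Ness framework the paper uses, and you correctly locate the crux: a convergence statement for the flow attached to $\mu_{\mathbb{R}}+i\lambda$ that cannot simply be quoted because the Kronheimer factor is neither flat nor compact. But at exactly that point your argument stops being a proof. Your ``second stage'' --- that a trajectory failing to converge in $X$ must escape along a path asymptotic to a one-parameter subgroup of $G$, yielding an algebraic destabilization, so that the analytic and Hesselink--Kempf stratifications coincide --- is asserted rather than established, and it is a statement of essentially the same depth as the theorem itself. On a noncompact, non-flat space nothing forces a divergent trajectory of $\psi_x$ (or of $\|\mu_{\mathbb{R}}+i\lambda\|^2$) to be asymptotic to a one-parameter subgroup; making that precise is exactly where quantitative information about the Kronheimer metric at infinity must enter, and you never supply it. The paper's Section \ref{kempf-ness proof} is devoted precisely to this missing content, and it bypasses the gradient flow altogether: after adjoining the auxiliary line $\mathbb{C}$ with the $\chi$-twisted action and potential $\tfrac12\log|z|^2$, it works one closed orbit $\theo=G\cdot(x,y,z)$ at a time, uses Mostow's decomposition $K\times_H\mathfrak{m}\cong\theo$ together with Azad--Loeb convexity, and reduces the whole theorem to showing that the Kempf--Ness function $F|_{\theo}$ is proper and bounded below (Propositions \ref{quotient diffeo} and \ref{bbp}).

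The substance of that reduction is the explicit asymptotic work your outline defers: the potential is replaced by $\mu_K+h+\overline{h}$, which on Nahm data equals $\int_0^1|\on{Im}\alpha|^2+\tfrac12|\beta|^2$; the action of $e^{isX}$ is transported to the gauge-theoretic model via $\mathsf{Y}(t)=(1-t)Y_\ell+tY_r$ and analyzed by the continuous-diagonalization result of the appendix (Proposition \ref{continuous diagonalization}); one then shows (Propositions \ref{estimate} and \ref{growth estimates prop}) that along any direction $X\in\mathfrak{m}$ with $\lambda(iX)\le 0$ the potential is either bounded --- impossible on a closed orbit with $z\neq0$ --- or grows at least quadratically in $s$, uniformly in the direction (Remark \ref{uniformity remark}); Mayrand's properness of $\mu_K$ and compactness of the unit sphere in $\mathfrak{m}$ then yield properness of $F|_{\theo}$. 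If you wish to keep your gradient-flow/{\L}ojasiewicz route, you would still need estimates of exactly this strength to rule out trajectories drifting to infinity, so as written your proposal identifies the needed input but does not provide it, and the proof is incomplete.
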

Such ``K\"ahler quotient $=$ GIT quotient'' assertions are ubiquitous in the literature and there is a standard approach to proving them: we note  the relevant references \cite{Kempf-Ness, Kirwan, AL, Sjamaar}, and especially the recent papers \cite{Hoskins, Mayrand, Takayama, NT}. 
In particular, it was known to some experts that one could prove (some version of) Theorem \ref{kempf-ness intro} using the standard approach (see \cite{Takayama} for a closely related situation as well as Theorem 2.1 and Section 3.3 of \cite{NT}).  But the standard approach still requires checking a convergence condition on downward Morse flows that does not seem to have been documented before in the generality we need, and we believe that to non-experts (such as ourselves) it is useful to have a complete proof in the literature.

  The second part of the paper then analyzes the hyperk\"ahler, or equivalently by Theorem \ref{kempf-ness intro}, algebraic-symplectic, Kirwan map for $\mathsf{M}$ in the special case of $\mathbb{G} = GL_n = SL_n\times_{\mu_n} \Gm$ acting on $\mathsf{M} = T^*SL_n\times T^*\mathbb{C}^n$ induced from the adjoint action on $SL_n$ and the obvious action on $\mathbb{C}^n$.  In particular, we prove:
\begin{thm}\label{main thm}
For $n\geq 2$, the hyperk\"ahler Kirwan map
\bd
H^*_{U(n)}\big(\mathsf{M}, \mathbb{Q}\big)\longrightarrow H^*\big(\mathsf{M}/\!\!/\!\!/ U(n), \mathbb{Q}\big)
\ed
fails to be surjective.  More specifically, the restriction of the hyperk\"ahler Kirwan map to the cohomology of pure weight with respect to the canonical mixed Hodge structures
\bd
\bigoplus_k W_k H^k_{U(n)}\big(\mathsf{M}, \mathbb{Q}\big) \longrightarrow \bigoplus_k W_kH^k\big(\mathsf{M}/\!\!/\!\!/ U(n), \mathbb{Q}\big)
\ed
 fails to be surjective.
\end{thm}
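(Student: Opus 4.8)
The plan is to make the problem completely explicit via Theorem~\ref{kempf-ness intro} and then exhibit a specific cohomology class on the quotient that cannot lie in the image for weight reasons. First I would use Theorem~\ref{kempf-ness intro} to replace the hyperk\"ahler quotient $\mathsf{M}/\!\!/\!\!/ U(n)$ by the algebraic-symplectic (GIT) quotient $\mu_{\mathbb C}^{-1}(0)/\!\!/_\chi GL_n$, where $\mathsf{M} = T^*SL_n \times T^*\mathbb{C}^n$ in complex structure $I$. The complex moment map for the $GL_n$-action (adjoint on $SL_n$, standard on $\mathbb{C}^n$) should be computed explicitly: a point of $T^*SL_n$ can be written as $(g,\xi)$ with $g\in SL_n$ and $\xi\in\mathfrak{sl}_n$ (after a trivialization), a point of $T^*\mathbb{C}^n$ as $(v,w)\in\mathbb{C}^n\times(\mathbb{C}^n)^*$, and $\mu_{\mathbb C}(g,\xi,v,w) = \xi - \mathrm{Ad}_g\,\xi + vw$ (projected suitably, up to the $\mathfrak{sl}_n$ versus $\mathfrak{gl}_n$ bookkeeping coming from $GL_n = SL_n\times_{\mu_n}\Gm$). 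This identifies $\mu_{\mathbb C}^{-1}(0)/\!\!/_\chi GL_n$ with a variety closely related to a Hilbert scheme / Calogero--Moser-type space — in fact this is where the connection to Vasserot's work and surjectivity in the quiver-variety setting enters, since the analogous \emph{additive} picture with $SL_n$ replaced by $\mathfrak{sl}_n$ is a Nakajima quiver variety for which Kirwan surjectivity holds.

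Next I would identify the source $H^*_{U(n)}(\mathsf{M},\mathbb{Q})$. Since $\mathsf{M}$ is equivariantly homotopy equivalent to $SL_n\times\mathbb{C}^n$ with its $U(n)$-action (cotangent bundles retract to their zero sections, $T^*\mathbb{C}^n$ to a point), we get $H^*_{U(n)}(\mathsf{M},\mathbb{Q}) \cong H^*_{U(n)}(SL_n,\mathbb{Q})$, and the $U(n)$-action on $SL_n$ is by conjugation, which factors through $PU(n)$; hence $H^*_{U(n)}(SL_n,\mathbb{Q}) \cong H^*(SL_n,\mathbb{Q}) \otimes H^*(BU(1),\mathbb{Q})$ roughly, in any case a ring generated in a controlled range of degrees with \emph{known, low} top weight. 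The crucial point is the mixed Hodge structure: $SL_n$ is an affine algebraic group and the weights on $H^k_{U(n)}(\mathsf{M})$ are bounded — the purely-weighted part $\bigoplus_k W_k H^k_{U(n)}(\mathsf{M},\mathbb{Q})$ is comparatively small, essentially the image of $H^*_{GL_n}(\mathrm{pt})$ together with the low-degree primitive classes of $SL_n$. Meanwhile the target $\mathsf{M}/\!\!/\!\!/ U(n)$, being a smooth affine (in structure $I$) hyperk\"ahler quotient of this explicit type, carries nontrivial pure-weight cohomology in degrees/weights that cannot be hit: the strategy is to produce, for each $n\geq 2$, an explicit nonzero class in $W_{2k}H^{2k}(\mathsf{M}/\!\!/\!\!/ U(n),\mathbb{Q})$ (or in some $W_jH^j$) whose degree exceeds the range in which the image of $\kappa$ on pure-weight cohomology is supported — e.g.\ by exhibiting a compact algebraic cycle, or a tautological Chern class of a natural bundle on the quotient, that pairs nontrivially against something and survives in the associated graded by weight.

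The main obstacle will be the last step: controlling the pure-weight cohomology of the quotient $\mathsf{M}/\!\!/\!\!/ U(n)$ precisely enough to see that it is strictly larger (in the relevant graded pieces) than the image of the Kirwan map. I expect to handle this by a direct geometric description of the quotient for small $n$ — especially $n=2$, where $\mathsf{M}/\!\!/\!\!/ U(2)$ should be an explicit surface (likely a resolution of $\mathbb{C}^2/\Gamma$ or a related ALE-type or Calogero--Moser-type space) whose $H^2$ is visibly nontrivial and pure of weight $2$ — and then bootstrapping to general $n$ either by a product/stratification argument or by a direct Morse-theoretic/Kirwan-stratification computation of the equivariant cohomology of the unstable locus. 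A clean alternative for the obstruction is to compare with the additive (quiver-variety) model: Kirwan surjectivity there forces all of $H^*(\text{quiver variety})$ to come from $H^*_{U(n)}(T^*(\mathfrak{sl}_n\times\mathbb{C}^n))$, and the multiplicative deformation $SL_n$ introduces a discrepancy — the $\pi_1$ of $SL_n$ is trivial but $SL_n$ has a $3$-dimensional primitive cohomology class which, after taking the quotient, produces pure-weight cohomology on $\mathsf{M}/\!\!/\!\!/ U(n)$ not present in the equivariant cohomology of $\mathsf{M}$, since $H^*_{U(n)}(\mathsf{M})$ sees only the \emph{low-weight} truncation. Making that discrepancy quantitative, with the mixed Hodge theory fully under control, is where the real work lies.
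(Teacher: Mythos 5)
Your first step (invoking Theorem \ref{kempf-ness intro} to replace the hyperk\"ahler quotient by the GIT quotient and studying the algebraic Kirwan map) matches the paper, but after that there is a genuine gap: you have no working mechanism for the obstruction. The ``degree/weight range'' idea cannot succeed as stated, because the image of $\kappa$ on pure cohomology already contains the image of $H^*(BGL_n,\mathbb{Q})$ (equivalently, tautological classes), which is pure and nonzero in every even degree; since $\mathsf{M}/\!\!/\!\!/ U(n)$ is a smooth affine variety of complex dimension $2n-2$, its cohomology is anyway confined to degrees $\leq 2n-2$, so there is no degree past which a pure class on the quotient is automatically missed --- one must show a \emph{specific} class is not hit, which requires a finer invariant. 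Your alternative suggestions (explicit small-$n$ computations plus ``bootstrapping,'' or a discrepancy coming from the primitive class in $H^3(SL_n)$) are not arguments: the $H^3(SL_n)$ class lives in the \emph{source} and is not of pure weight, so it cannot by itself produce pure classes on the quotient outside the image, and you yourself concede that making the discrepancy quantitative ``is where the real work lies'' --- but that is exactly the content of the theorem.

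The paper's missing idea is a finite-group equivariance argument using the center $\Gamma=\mu_n\subset SL_n$, mediated by a comparison with the Hilbert scheme. One embeds $T^*SL_n\times T^*\mathbb{C}^n$ into $T^*GL_n\times T^*\mathbb{C}^n$ as the fiber of $\det\times\operatorname{tr}$ over $(1,0)$; the $GL_n$-reduction of the larger space is $(\mathbb{C}^*\times\mathbb{C})^{[n]}$ (Lemma \ref{Hilb scheme}), and $H^*_{GL_n}(GL_n,\mathbb{Q})\to H^*_{GL_n}(SL_n,\mathbb{Q})$ is surjective (via $SL_n\to PGL_n$), so surjectivity of $\kappa$ would force surjectivity of the restriction map $H^*\big((\mathbb{C}^*\times\mathbb{C})^{[n]},\mathbb{Q}\big)\to H^*(\mathsf{M}/\!\!/\!\!/ U(n),\mathbb{Q})$. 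Now $\Gamma$ acts trivially on $H^*\big((\mathbb{C}^*\times\mathbb{C})^{[n]}\big)$ because its action extends to a connected group $\mathbb{C}^*$, so the image of the restriction map lies in the $\Gamma$-invariants; but on the fiber, the $\mathbb{C}^*$-fixed punctual subschemes $\{\xi\}\times\operatorname{Spec}\mathbb{C}[t]/(t^n)$ with $\xi\in\mu_n$ form $n$ connected components of the fixed locus that $\Gamma$ permutes simply transitively, so by the Bia\l ynicki-Birula decomposition the regular representation of $\Gamma$ appears in the pure part of $H^*(\mathsf{M}/\!\!/\!\!/ U(n),\mathbb{Q})$. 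For $n\geq 2$ this contains nontrivial $\Gamma$-isotypic pieces, contradicting surjectivity. Without this (or an equivalent) device, your outline does not close.
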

If we replace $\mathsf{M} = T^*(SL_n\times \mathbb{A}^n)$ in the construction by $T^*(GL_n\times\mathbb{A}^n)$, the hyperk\"ahler quotient, or equivalently (by Theorem \ref{kempf-ness intro}) GIT quotient, becomes (Lemma \ref{Hilb scheme})
$T^*(GL_n\times\mathbb{A}^n)/\!\!/\!\!/ U(n) \cong (\mathbb{C}^*\times \mathbb{C})^{[n]}$, the Hilbert scheme of $n$ points on $\mathbb{C}^*\times \mathbb{C}$.  Our proof of Theorem \ref{main thm} carries out a low-tech comparison of cohomologies of $\mathsf{M}/\!\!/\!\!/G$ with those of $(\mathbb{C}^*\times \mathbb{C})^{[n]}$ to deduce the failure of surjectivity.  
This is the simplest situation we have so far found in which we can prove that the hyperk\"ahler Kirwan map fails to be surjective, but one can use the same technique to produce other examples: for example, replacing $T^*SL_n$ by an analogue associated to a cyclic quiver with $\ell$ nodes. 

We note that hyperk\"ahler Kirwan surjectivity was already known to fail for certain {\em infinite-dimensional} hyperk\"ahler quotients yielding moduli spaces of Higgs bundles: see \cite{Hitchin} (and the relevant discussion in \cite{Hausel}, \cite{DWWW}) as well as \cite{CNS}.\footnote{In particular, our method (using the action of the center $Z(SL_n) = \mu_n$ on cohomology) will be recognized by the reader familiar with the Higgs bundle context (cf. \cite{DWWW, CNS}).} 
Experts knew that there should also exist counterexamples among finite-dimensional hyperk\"ahler quotients, but such examples seem not to have been published.  Thus, their availability beyond a circle of experts was uncertain. 
In any case, lacking a definitive characterization of the image of \eqref{eq:HKK map} in all cases, it seems important to begin to circumscribe the possibilities explicitly.

H. Nakajima noted that the examples we consider fit naturally in a more general context of Coulomb branches associated to $3D$ $N=4$  gauge theories; he also explained that there should probably be many more examples of failure of hyperk\"ahler Kirwan surjectivity to be found among branches of moduli spaces of vacua of such theories: see \cite{NT} for extensive treatment of Coulomb branches of affine quiver gauge theories from the viewpoint of Cherkis bow varieties.
\subsection*{Acknowledgments}
We are grateful to Richard Wentworth for an important conversation; to John D'Angelo and Andrew Dancer for helpful discussions and comments; to Young-Hoon Kiem and Hiraku Nakajima for very helpful remarks on K\"ahler = GIT identifications; and to Hiraku Nakajima for a very informative discussion of Coulomb branches and bow varieties.  This manuscript has been circulated since May 2018; we are grateful to those who have provided comments, including Young-Hoon Kiem, Hiraku Nakajima, and Ciaran O'Neill.
The first author was supported by EPSRC programme grant EI/I033343/1.  The second author was supported by NSF grants DMS-1502125 and DMS-1802094 and a Simons Foundation fellowship.

\begin{convention}\label{convention}
All algebraic varieties and algebraic groups are defined over $\mathbb{C}$.
\end{convention}

\section{Construction of the Quotient}
\subsection{Complex Group Construction}
We begin with a finite list $G_1,\dots, G_n$ of complex reductive groups, and let $\mathbb{G}= \prod_i G_i$.  Each factor $G_i$ inherits the left and right actions of $G_i$ by $(g_\ell, g_r)\cdot g = g_\ell gg_r\inv$; we call the resulting action of $G_i\times G_i$ the {\em left-right action}.  Then $\mathbb{G}$ inherits a left-right action of $\mathbb{G}^2$, inducing an action on $T^*\mathbb{G}$ as well.  
For a reductive subgroup $G\subset\mathbb{G}^2$, we may choose a representation $V$ of $G$ and obtain a product $\mathsf{M} = T^*\mathbb{G}\times T^*V$ with an induced $G$-action.  As we explain below, $T^*\mathbb{G}$ admits a hyperk\"ahler metric, constructed by Kronheimer; if $K\subset G$ is a maximal compact subgroup, the Kronheimer metric is $K$-equivariant.

\subsection{Special Case}\label{sec:special case}
In Section \ref{main proof} below, we will consider the group 
\bd
G:=  GL_n(\mathbb{C}) \; \text{with maximal compact subgroup} \; K: = U(n);
\ed
$G$ acts on itself by the adjoint action, preserving the subgroup $SL_ n$.  The maximal compact subgroup $K$ thus acts compatibly on $SL_n$ and $GL_n$, with induced actions on their cotangent bundles. 
\begin{remark}
Henceforth, we $GL_n$-equivariantly identify $\mathfrak{gl}_n$ with $\mathfrak{gl}_n^*$ via the pairing $(a,b) \mapsto \on{Tr}(ab)$.
\end{remark}
Consider the vector representation $\mathbb{C}^n$ of $GL_n$, and the induced $GL_n$-action on $T^*\mathbb{C}^n$.  Write
\bd
\mathsf{M}_n:=T^*SL_n \times T^*\mathbb{C}^n=  SL_n\times \mathfrak{sl}_n^* \times T^*\mathbb{C}^n
\ed
with its induced $GL_n$-action.  

Writing
$\mathfrak{gl}_ n = \mathfrak{sl}_n\times \mathbb{C}$ as $GL_n$-representations, we obtain a closed immersion $T^*SL_n\hookrightarrow T^*GL_n$.  Via the trace identification, $T^*SL_n$ is identified with the fiber $(\on{det}\times \on{tr})\inv(1,0)$ of the map
\begin{equation}
GL_n\times \mathfrak{gl}_n\xrightarrow{(\on{det}\times \on{tr})} \Cs\times \mathbb{C}.
\end{equation}
One checks that the canonical complex moment map for the $GL_n$-action on $T^*(GL_n\times\mathbb{C}^n)$ is given, under the trace identification, by 
\bd
\overline{\mu}_{\mathbb{C}}(X,Y, i,j) = XYX\inv - Y + ij \hspace{1em} \text{for} \hspace{1em} (X,Y,i,j)\in GL_n\times\mathfrak{gl}_n\times \mathbb{C}^n\times(\mathbb{C}^n)^*.
\ed
Via the identification of $\mathfrak{gl}_n^*\cong \mathfrak{gl}_n$ and the resulting identification $\mathfrak{sl}_n^*\cong \mathfrak{sl}_n$, we find that the restriction of $\overline{\mu}_{\mathbb{C}}$ to $T^*SL_n\times T^*\mathbb{C}^n$ is identified with the complex moment map $\mu_{\mathbb{C}}$ for the latter.

\subsection{Hyperk\"ahler Structure}\label{sec:hk structure}
For this section we fix a complex reductive group $G$. 

We now consider the space $\mathcal{A} = C^\infty\big([0,1],\mathfrak{k}\otimes\mathbb{H}\big)$ of smooth maps from the interval $[0,1]$ to the quaternionic Lie algebra $\mathfrak{k}\otimes\mathbb{H}$.  Write $T = (T_0,T_1,T_2,T_3)$ for an element of $\mathcal{A}$.  The gauge group $\mathcal{G} = C^\infty_0\big([0,1],K\big)$ of smooth maps $f: [0,1] \rightarrow K$ that satisfy $f(0) = e = f(1)$, where $e\in K$ is the identity element, acts on $\mathcal{A}$ by
$f\cdot (T_0,T_1,T_2,T_3) = (fT_0f\inv + \frac{df}{dt}f\inv, fT_1f\inv, fT_2f\inv, fT_3f\inv)$.

As explained by Kronheimer \cite{Kronheimer} and subsequently explored and clearly exposed in \cite{DancerSwann}, the zero pre-image $Z$ of the natural associated infinite-dimensional hyperk\"ahler moment map consists of those $(T_0,T_1,T_2,T_3)$ satisfying {\em Nahm's equations},
\bd
\frac{dT_i}{dt} + [T_0,T_i] = [T_j,T_k],
\ed
where $(i,j,k)$ is a cylic permutation of $(1,2,3)$.   Kronheimer shows that $Z/\mathcal{G} \cong T^*G$.  It follows from the construction that $T^*G$ inherits a complete hyperk\"ahler metric that is $K\times K$-invariant under the left-right action.  

One can more easily see the complex structure $I$, the standard complex structure, on $T^*G$ from the {\em complex Nahm equation}.  More precisely, write $(\alpha,\beta) = (T_0 + iT_1, T_2+iT_3)$; the complex Nahm equation is 
$\frac{d\beta}{dt} + [\alpha,\beta] =0$.  Write $Z_{\mathbb{C}}\subset \mathcal{A}_{\mathbb{C}}= \{(\alpha,\beta)\}$ for its space of solutions, the zero preimage of a complex moment map for the action on $\mathcal{A}_{\mathbb{C}}$ of the complex group $\mathcal{G}_{\mathbb{C}} = C^\infty_0([0,1],G)$ via
\begin{equation}\label{complex gauge group action}
g(t)\cdot (\alpha(t),\beta(t)) = \big(\on{Ad}_{g(t)}(\alpha(t)) - \frac{dg}{dt} g(t)\inv, \on{Ad}_{g(t)}(\beta(t))\big).
\end{equation}
and one gets a diffeomorphism 
$Z/\mathcal{G} \rightarrow Z_{\mathbb{C}}/\mathcal{G}_{\mathbb{C}}$ defined by 
$(T_0,T_1,T_2,T_3)\mapsto (T_0-iT_1,T_2+iT_3)$.  

Under Kronheimer's identification of $Z_{\mathbb{C}}/\mathcal{G}_{\mathbb{C}}$ with $T^*G$, the left-right action of $e^Y \in G\times G$ for $Y\in\mathfrak{g}\times\mathfrak{g}$ takes on a particularly simple form.   Namely, write $Y = (Y_\ell, Y_r)$, and consider the functions $\mathsf{Y}: [0,1]\rightarrow \mathfrak{g}$ and $g: [0,1]\rightarrow G$ defined by 
\begin{equation}\label{Lie algebra function}
\mathsf{Y}(t) = (1-t)Y_\ell + tY_r \; \text{and} \; g(t) = e^{\mathsf{Y}(t)}.
\end{equation}
Then $g(t)$  naturally acts on $\mathcal{A} = \mathcal{A}_{\mathbb{C}}$ preserving $Z_{\mathbb{C}}$.  It follows from the discussion at the end of Section 2 of \cite{DancerSwann} that this action is identified with the left-right action of $e^Y$ on $T^*G$.  

\subsection{K\"ahler Potential}
It is known that there is a (global) K\"ahler potential $\mu_K$ for the Kronheimer metric on $T^*G$ for $G$ any complex reductive group.
  
More precisely, \cite{HKLR} show that if one forms the hyperk\"ahler reduction of a, possibly infinite-dimensional, hyperk\"ahler manifold by an appropriate group of hyperk\"ahler isometries; and if, in addition, the reduced manifold comes equipped with an $S^1$-action that rotates complex structures $I$ and $J$ and fixes complex structure $K$, then the moment map $\mu_K$ for this $S^1$-action is a global potential for the hyperk\"ahler metric.  

It is computed in \cite{DancerSwann} that $\mu_K(T_0,T_1,T_2,T_3) = \int_0^1 |T_1|^2 + |T_2|^2$.  Mayrand proves:
\begin{prop}[Mayrand \cite{Mayrand}]\label{Mayrand Kahler potential}
The K\"ahler potential $\mu_K$ is proper and bounded below.
\end{prop}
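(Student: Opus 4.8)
The plan is to prove the two assertions separately. \emph{Bounded below} is immediate: the integrand $|T_1|^2+|T_2|^2$ is pointwise nonnegative, so $\mu_K\ge 0$ on $Z$, and hence the function it defines on $T^*G=Z/\mathcal{G}$ is nonnegative. The substance is \emph{properness}. Since $\mu_K$ is a smooth K\"ahler potential, in particular continuous, on $T^*G$, it suffices to show that for each $C>0$ every sequence $p_m\in T^*G$ with $\mu_K(p_m)\le C$ has a subsequence converging in $T^*G$ (necessarily to a point of $\mu_K^{-1}([0,C])$, by continuity); then $\mu_K^{-1}([0,C])$ is sequentially compact, hence compact, as $T^*G$ is metrizable.

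First I would, using Kronheimer's identification, pick Nahm solutions $T^{(m)}=(T_0^{(m)},\dots,T_3^{(m)})\in Z$ representing $p_m$ and, within each $\mathcal{G}$-orbit, pick a good representative: using the gauge freedom subject to $f(0)=f(1)=e$ one can normalize the connection component $T_0^{(m)}$ to be a \emph{constant} $X_m\in\mathfrak{k}$, and since $K$ is compact the $X_m$ (principal logarithms of holonomies, which lie in a compact subgroup) may be taken in a fixed bounded subset of $\mathfrak{k}$. It then remains to extract a subsequence of the $T^{(m)}$ converging in $C^\infty([0,1],\mathfrak{k}\otimes\mathbb{H})$: the limit lies in $Z$ and represents the limit of the $p_m$, by continuity of Kronheimer's correspondence.

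The engine is an a priori estimate, of the following shape. Put $u(t):=|T_1(t)|^2+|T_2(t)|^2+|T_3(t)|^2$; a bound $\mu_K\le C$ bounds $\int_0^1 u\,dt$ (indeed $\mu_K$ is, up to an additive constant, that integral). Differentiating $u$ along Nahm's equations, the connection $T_0$ drops out entirely, since $\sum_i\langle[T_0,T_i],T_i\rangle=0$ (the bracket with $T_0\in\mathfrak{k}$ is skew for an invariant inner product), and the total antisymmetry of $(a,b,c)\mapsto\langle[a,b],c\rangle$ collapses the remaining terms to the identity $\tfrac{d}{dt}u=6\langle[T_1,T_2],T_3\rangle$. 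Hence $|\tfrac{d}{dt}u|\le c_0 u^{3/2}$ by the arithmetic--geometric mean inequality, so $|\tfrac{d}{dt}\sqrt{u}|\le\tfrac{c_0}{2}u$; integrating this and using $\inf_t u(t)\le\int_0^1 u\,dt$ yields the uniform $C^0$-bound $\sup_t\sqrt{u(t)}\le\sqrt{B}+\tfrac{c_0}{2}B$ with $B=\int_0^1 u\,dt\le C$. Thus $T_1^{(m)},T_2^{(m)},T_3^{(m)}$ are uniformly $C^0$-bounded; combined with the bound on $T_0^{(m)}=X_m$, Nahm's equations bound $\tfrac{d}{dt}T_i^{(m)}$ uniformly, and repeated differentiation (using $\dot X_m=0$) bounds $T^{(m)}$ in $C^k$ uniformly in $m$ for every $k$. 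Arzel\`a--Ascoli together with a diagonal argument then produces the desired $C^\infty$-convergent subsequence, whose limit --- a uniform limit of Nahm solutions --- again solves Nahm's equations.

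The a priori estimate is the step I expect to be the main obstacle, and the one for which \cite{Mayrand} should be consulted in detail. Two points in particular need care: that the gauge normalization of $T_0^{(m)}$ can be performed uniformly in $m$ and is compatible with Kronheimer's identification; and that $\mu_K$ genuinely controls $\int_0^1 u\,dt$ and the differential inequality can indeed be closed, so that, crucially, the boundary data of the solution (which pins down the point of $T^*G$) is bounded purely in terms of $\mu_K$, and not merely the ``visible'' part of the Nahm tuple.
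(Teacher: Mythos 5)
First, a point of comparison: the paper does not prove this proposition at all --- it is imported as a black box from Mayrand \cite{Mayrand} --- so there is no internal argument to measure your sketch against; the assessment below is of your proposal on its own terms. Your overall scheme is sound as a mechanism: gauging $T_0$ to a constant of bounded norm (using surjectivity of $\exp$ on the compact connected $K$ and the freedom $f(0)=f(1)=e$), the identity $\tfrac{d}{dt}u=6\langle[T_1,T_2],T_3\rangle$ for $u=|T_1|^2+|T_2|^2+|T_3|^2$, the resulting bound $\sup_t\sqrt{u}\le\sqrt{B}+\tfrac{c_0}{2}B$ with $B=\int_0^1u\,dt$, the bootstrap through Nahm's equations, and Arzel\`a--Ascoli would indeed give compactness of sublevel sets of any gauge-invariant function that dominates $\int_0^1 u\,dt$ (minor points: differentiate $\sqrt{u+\epsilon}$ to avoid zeros of $u$, and convergence of classes follows just from continuity of the quotient map $Z\to Z/\mathcal{G}$, no delicate ``continuity of Kronheimer's correspondence'' is needed).

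The genuine gap is exactly the step you flag at the end, and it is not a technicality: with the formula quoted in the paper, $\mu_K(T)=\int_0^1(|T_1|^2+|T_2|^2)$, the claim that a bound on $\mu_K$ bounds $\int_0^1 u\,dt$ fails, and the $T_3$-direction cannot be recovered from $\mu_K$ alone. Indeed, the constant quadruples $(T_0,T_1,T_2,T_3)=(0,0,0,\tau)$, $\tau\in\mathfrak{k}$, solve Nahm's equations, have $\mu_K=0$, and are pairwise non-gauge-equivalent (the value $T_3(0)=\tau$ is invariant under gauge transformations trivial at $0$); under Kronheimer's identification they correspond to points $(e,i\tau)$ of $T^*G\cong G\times\mathfrak{g}$, which leave every compact set as $|\tau|\to\infty$. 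Already for $G=\mathbb{C}^*$ one sees the issue: there $\mu_K=T_1^2+T_2^2$ is independent of the noncompact coordinate $T_3$. So properness is a statement about the correctly normalized potential, not about the raw formula $\int(|T_1|^2+|T_2|^2)$: your engine runs verbatim once applied to the modified potential \eqref{Kahler potential fmla},
\begin{equation*}
\int_0^1 |\on{Im}(\alpha)|^2+\tfrac12|\beta|^2 \;=\; \int_0^1 |T_1|^2+\tfrac12\big(|T_2|^2+|T_3|^2\big)\;\ge\;\tfrac12\int_0^1 u\,dt,
\end{equation*}
which is the function the paper actually needs downstream (Corollary \ref{proper-bdd-below}, Proposition \ref{bbp}), and this is the point at which the precise statement proved in \cite{Mayrand} must be consulted rather than assumed.
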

\noindent

In order to more easily analyze its growth farther on, we wish to modify the K\"ahler potential $\mu_K$ as follows.  Recall that the inner product on $\mathfrak{k}$ is given by $\langle a,b\rangle = \on{Tr}(a\overline{b}^{\on{t}})$.
We use repeatedly that for $T\in \mathfrak{u}(n)$, $\overline{T}^{\on{t}} = -T$.  Then
$\overline{\beta}^{\on{t}} = \overline{T}_2^{\on{t}} - i\overline{T}_3^{\on{t}} = -T_2 + iT_3$. 
Thus  $\beta - \overline{\beta}^{\on{t}} = 2T_2$.  Now 
\begin{equation}\label{mu calc}
\int_0^1 |T_2|^2 = \int_0^1 T_2\overline{T}_2^{\on{t}}
= \frac{1}{4}\int_0^1  (\beta-\overline{\beta}^{\on{t}})\overline{(\beta-\overline{\beta}^{\on{t}})}^{\on{t}}
= -\frac{1}{4}\int_0^1 \on{Tr}(\beta - \overline{\beta}^{\on{t}})^2.
\end{equation}
It is clear from the above description of complex structure $I$ that the function
\begin{equation}\label{defn of h}
h(\beta) = \frac{1}{4}\int_0^1 \on{Tr}\beta^2
\end{equation}
defines a holomorphic function on $T^*G$ (it is holomorphic by the above, and it is clearly invariant under the complex gauge group, hence descends to $T^*G$ in complex structure $I$).  
It follows:
\begin{lemma}
The function $\mu_K + h + \overline{h}$ is also a K\"ahler potential for the Kronheimer metric.  
\end{lemma}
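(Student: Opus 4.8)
The plan is to unwind the definition of ``K\"ahler potential'' and to use the standard fact that this notion is insensitive to the addition of a pluriharmonic function. Write $\omega_I$ for the K\"ahler form of the Kronheimer metric in complex structure $I$. That $\mu_K$ is a K\"ahler potential means precisely that $\omega_I = c\, i\,\partial\bar\partial\mu_K$ for the fixed normalizing constant $c$ used in \cite{HKLR, DancerSwann} (the precise value of $c$ is irrelevant for what follows). So it suffices to establish two things: that $\mu_K + h + \overline{h}$ is again a real-valued function, and that $\partial\bar\partial(h+\overline{h}) = 0$ on $T^*G$ equipped with complex structure $I$.

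The first point is immediate: $h + \overline{h} = 2\operatorname{Re}(h)$ is real-valued and smooth, and $\mu_K$ is real-valued by construction (it is the value of a moment map, cf. Proposition \ref{Mayrand Kahler potential}), so the sum is a real-valued smooth function, as any K\"ahler potential must be.

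For the second point, recall from the paragraph preceding the lemma that $h(\beta) = \frac{1}{4}\int_0^1 \operatorname{Tr}\beta^2$ is gauge-invariant and, via the description of complex structure $I$ through the complex Nahm equation, descends to a genuinely holomorphic function on $(T^*G, I)$. Holomorphicity gives $\bar\partial h = 0$, hence $\partial\bar\partial h = 0$; taking complex conjugates, $\bar\partial\,\partial\overline{h} = 0$, and then $\partial\bar\partial\overline{h} = -\bar\partial\,\partial\overline{h} = 0$ using $\partial\bar\partial + \bar\partial\partial = 0$. Therefore $\partial\bar\partial(h + \overline{h}) = 0$ globally on $T^*G$, i.e.\ $h + \overline{h}$ is pluriharmonic. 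Combining,
\[
c\, i\,\partial\bar\partial\big(\mu_K + h + \overline{h}\big) = c\, i\,\partial\bar\partial\mu_K + c\, i\,\partial\bar\partial\big(h + \overline{h}\big) = c\, i\,\partial\bar\partial\mu_K = \omega_I,
\]
so $\mu_K + h + \overline{h}$ is a (global, since $\mu_K$ and $h$ are globally defined) K\"ahler potential for the Kronheimer metric.

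I do not expect a genuine obstacle here: the only points requiring care are bookkeeping ones, namely matching whatever normalization of ``K\"ahler potential'' is in force in the cited references, and making sure that the holomorphicity and global definedness of $h$ asserted around \eqref{defn of h} are used correctly, so that $\partial\bar\partial h$ vanishes globally on $T^*G$ rather than merely locally on coordinate patches. Everything else is the elementary observation that K\"ahler potentials are well defined only up to pluriharmonic functions, and that $2\operatorname{Re}$ of a holomorphic function is such a function.
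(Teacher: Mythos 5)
Your proof is correct and follows essentially the same route as the paper, which notes that $h$ is holomorphic on $(T^*G,I)$ and then states the lemma as an immediate consequence; you have simply made explicit the standard fact that adding $h+\overline{h}=2\operatorname{Re}(h)$, a pluriharmonic function, does not change $i\partial\bar\partial$ of the potential. No gaps.
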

We have 
\begin{equation}\label{hbar calc}
\overline{h(\beta)} = \frac{1}{4}\int_0^1 \on{Tr}\overline{\beta}^2 = \frac{1}{4}\int_0^1 \on{Tr}\big(\overline{\beta}^{\on{t}}\big)^2.
\end{equation}
Combining \eqref{mu calc} and \eqref{hbar calc} gives:
\bd
(\mu_K + h + \overline{h})(\alpha, \beta) = \int_0^1 |T_1|^2 + \frac{1}{4}\int_0^1 \on{Tr}\big(\beta  \overline{\beta}^{\on{t}} + \overline{\beta}^{\on{t}} \beta\big).
\ed
Using that $\on{Tr}(AB) = \on{Tr}(BA)$ and that $\overline{T}_i^{\on{t}} = - T_i$ for $T_i\in\mathfrak{k}$ gives 
\bd
\on{Tr}\beta \overline{\beta}^{\on{t}} = \on{Tr} \overline{\beta}^{\on{t}}\beta = \on{Tr}\left[(T_2 + i T_3) (\overline{T}_2^{\on{t}} - i \overline{T}_3^{\on{t}})\right]    = |T_2|^2 + |T_3|^2, \; \text{yielding} 
\ed
\begin{equation}\label{Kahler potential fmla}
(\mu_K + h + \overline{h})(\alpha, \beta)   = \int_0^1 |T_1|^2 + \frac{1}{2} |T^2_2| + \frac{1}{2} |T_3^2|
 = \int_0^1 |\on{Im}(\alpha)|^2 + \frac{1}{2} |\beta|^2.
\end{equation}
In particular, it is immediate that 
\begin{equation}\label{potential comparison}
\mu_K + h + \overline{h} \geq \frac{1}{2}\mu_K.
\end{equation}  
Recall that, under the identification of the moduli space of solutions of Nahm's equations with $T^*G$, the left-right action of $K\times K$ is identified with the action of $C^\infty([0,1],K)$.  It is immediate that:
\begin{lemma}\label{invariance}
The potential \eqref{Kahler potential fmla} is invariant under the left-right action of $K\times K$.
\end{lemma}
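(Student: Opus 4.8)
The plan is to work with Nahm data throughout and to read everything off the explicit formula \eqref{Kahler potential fmla}. The crucial feature of that formula is that the potential $\mu_K + h + \overline h$ is expressed purely in terms of $T_1$, $T_2$, $T_3$, namely
\[
(\mu_K + h + \overline h)(\alpha, \beta) \;=\; \int_0^1 |\on{Im}(\alpha)|^2 + \tfrac12 |\beta|^2 \;=\; \int_0^1 |T_1|^2 + \tfrac12 |T_2|^2 + \tfrac12 |T_3|^2 ,
\]
and, in particular, does \emph{not} involve the component $T_0$ (equivalently, it depends on $\alpha$ only through $\on{Im}(\alpha)$). This is the whole point of isolating that component, so I would emphasize it at the outset.

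Next I would recall the form of the $K\times K$-action in these coordinates. By construction $\mu_K + h + \overline h$ is a well-defined function on $T^*G = Z/\mathcal{G}$, so its pullback to the space $Z$ of solutions of Nahm's equations is $\mathcal{G}$-invariant; and, under Kronheimer's identification, the left--right action of $K\times K$ on $Z/\mathcal{G}$ is induced by the action of the full gauge group $C^\infty([0,1],K)$ on $Z$. It therefore suffices to check that the pullback to $Z$ is invariant under an arbitrary $g\in C^\infty([0,1],K)$. Such a $g$ acts, by the gauge action formula recalled in Section \ref{sec:hk structure}, by
\[
(T_0, T_1, T_2, T_3) \longmapsto \Big( g T_0 g\inv + \tfrac{dg}{dt} g\inv,\; g T_1 g\inv,\; g T_2 g\inv,\; g T_3 g\inv \Big),
\]
so each of $T_1, T_2, T_3$ is merely conjugated, pointwise in $t\in[0,1]$, by $g(t)\in K$, while only $T_0$ acquires the inhomogeneous term $\tfrac{dg}{dt}g\inv$.

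Then I would observe that conjugation by an element of $K$ is an isometry of $\mathfrak{k}$ for the inner product $\la a, b\ra = \on{Tr}(a\,\overline{b}^{\,\on{t}})$: for $g\in K$ one has $\overline{g}^{\,\on{t}} = g\inv$, whence $\overline{(g a g\inv)}^{\,\on{t}} = g\,\overline{a}^{\,\on{t}}\,g\inv$ and $\on{Tr}\big((g a g\inv)\,\overline{(g b g\inv)}^{\,\on{t}}\big) = \on{Tr}(a\,\overline{b}^{\,\on{t}})$. Taking $g = g(t)$ gives $|g(t)\, T_i\, g(t)\inv|^2 = |T_i|^2$ for every $t\in[0,1]$ and $i = 1, 2, 3$. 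Since the integrand in \eqref{Kahler potential fmla} is built only from $T_1, T_2, T_3$, it is pointwise-in-$t$ invariant under the $K\times K$-action, hence so is the integral; this proves the lemma. (The same computation shows that the unmodified potential $\mu_K = \int_0^1 |T_1|^2 + |T_2|^2$ is $K\times K$-invariant as well.)

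There is no serious obstacle: the entire content is the bookkeeping observation that the potential involves none of $T_0$ --- the one Nahm component whose transformation law under a nonconstant gauge path is inhomogeneous, and hence fails to be pointwise norm-preserving --- together with the unitarity of $K$, which makes the remaining components' transformations norm-preserving fiberwise over $t$.
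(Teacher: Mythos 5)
Your proof is correct and is exactly the argument the paper intends: just before the lemma the paper recalls that the left--right $K\times K$-action is induced by the full gauge group $C^\infty([0,1],K)$ acting on Nahm data, and then declares the invariance ``immediate,'' precisely because the potential \eqref{Kahler potential fmla} involves only $T_1,T_2,T_3$, which transform by pointwise unitary conjugation. You have simply written out the bookkeeping the authors leave implicit.
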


Suppose given a finite collection $G_i$, $i=1,\dots, n$ of complex reductive groups and a representation $V$ of $\mathbb{G} := \prod_i G_i$.  Choose a Hermitian inner product on $V$ that is $\mathbb{K}$-invariant for $\mathbb{K} = \prod_i K_i$, $K_i$ a maximal compact subgroup of $G_i$.  
We now equip $\mathsf{M} = \prod_i T^*G_i \times T^*V$ with the product hyperk\"ahler metric, where each $T^*G_i$ has the Kronheimer metric and $T^*V$ is given the flat metric with K\"ahler potential $|\cdot|^2$.  We thus obtain a K\"ahler potential $F_1(x,y) = \mu_K(x) + h(x) + \overline{h}(x) +|y|^2$ on $T^*\mathbb{G}\times T^*V$. The following is immediate from Proposition \ref{Mayrand Kahler potential} and the inequality \eqref{potential comparison}.
\begin{corollary}\label{proper-bdd-below}
The K\"ahler potential $F_1: \mathsf{M}\rightarrow \mathbb{R}$ is proper and bounded below.
\end{corollary}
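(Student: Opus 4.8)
The plan is to deduce everything from Proposition~\ref{Mayrand Kahler potential} together with the inequality \eqref{potential comparison}, using only the trivial positivity of $|y|^2$; I do not expect a genuine obstacle, as the corollary really is ``immediate'' once the bookkeeping is laid out.

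First I would record the splitting $F_1(x,y) = \big(\mu_K + h + \overline{h}\big)(x) + |y|^2$, where $\mu_K + h + \overline{h}$ is a function on $T^*\mathbb{G}$ and $|y|^2$ a function on $T^*V$. Since $\mathbb{G}$ is a complex reductive group, Proposition~\ref{Mayrand Kahler potential} applies to $\mu_K$ on $T^*\mathbb{G}$: there is a constant $C \geq 0$ with $\mu_K \geq -C$, and every sublevel set $\{\mu_K \leq r\} \subseteq T^*\mathbb{G}$ is compact. Combining \eqref{potential comparison} with $\mu_K \geq -C$ and $|y|^2 \geq 0$ gives $F_1 \geq \tfrac{1}{2}\mu_K(x) + |y|^2 \geq -\tfrac{1}{2}C$, which yields boundedness below.

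For properness, since $F_1$ is smooth, hence continuous, and bounded below, it suffices to show that each sublevel set $\{F_1 \leq c\} \subseteq \mathsf{M}$ is compact. On such a set the inequality $F_1 \geq \tfrac{1}{2}\mu_K(x) + |y|^2$ together with $|y|^2\geq 0$ forces $\mu_K(x) \leq 2c$, and then, using $\mu_K(x) \geq -C$, also $|y|^2 \leq c + \tfrac{1}{2}C$. Hence $\{F_1 \leq c\}$ is contained in the product of the compact set $\{\mu_K \leq 2c\} \subseteq T^*\mathbb{G}$ (compact by Proposition~\ref{Mayrand Kahler potential}) with the closed ball of radius $\sqrt{\,c + \tfrac{1}{2}C\,}$ in $T^*V$; being a closed subset of a compact set in the Hausdorff space $\mathsf{M}$, it is itself compact.

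The only point requiring even minor care is the passage from Mayrand's statement, which is phrased for $T^*G$ with $G$ a single complex reductive group, to the product group $\mathbb{G} = \prod_i G_i$. This is harmless: one may either apply the cited result directly to the complex reductive group $\mathbb{G}$, or observe that Nahm's equations, the Kronheimer metric, and the potential $\mu_K$ on $T^*\mathbb{G}$ all decompose as the corresponding data on the factors $T^*G_i$, so that both properness and boundedness below follow factor by factor. I anticipate no further difficulty.
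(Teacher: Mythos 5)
Your argument is correct and is exactly the paper's route: the paper declares the corollary ``immediate'' from Proposition \ref{Mayrand Kahler potential} and the inequality \eqref{potential comparison}, and your write-up simply supplies the sublevel-set bookkeeping (plus the harmless observation that $\mathbb{G}=\prod_i G_i$ is itself complex reductive, or that everything splits factorwise). Nothing differs in substance from the paper's intended proof.
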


\subsection{The GIT/Hyperk\"ahler Quotient Construction}
Give a finite collection $G_i$, $i=1,\dots, n$ of complex reductive groups as above, choose and fix a complex reductive subgroup $G\subset \mathbb{G}\times\mathbb{G}$, acting on $T^*\mathbb{G}$ via the left-right action; write $K\subset G$ for a choice of maximal compact subgroup.
Choose a representation $V$ of $G$, and let $\mathsf{M} = T^*(\mathbb{G}\times V)$.  Choose a character $\chi: G\rightarrow \Gm$ and write $\lambda = d\chi:\mathfrak{g}\rightarrow\mathbb{C}$.
\begin{thm}\label{kempf-ness thm}
The map 
\bd
\mu_{\mathbb{C}}\inv(0)\cap \mu_{\mathbb{R}}\inv(-i\lambda) \hookrightarrow \mu_{\mathbb{C}}\inv(0)
\ed
 takes values in $\big(\mu_{\mathbb{C}}\inv(0)\big)^{\on{\chi-ss}}$ and induces a homeomorphism of topological stacks
\bd
(\mu_{\mathbb{C}}\inv(0)\cap \mu_{\mathbb{R}}\inv(-i\lambda))/K \simeq \mu_{\mathbb{C}}\inv(0)^{\chi-\on{ss}}/G
\ed
and a
 complex-analytic isomorphism
\bd
 \big(\mu_{\mathbb{C}}\inv(0)\cap \mu_{\mathbb{R}}\inv(-i\lambda)\big)/K  = \mathsf{M}/\!\!/\!\!/_{(0,-i\lambda)} K \xrightarrow{\cong} \mu_{\mathbb{C}}\inv(0)/\!\!/_{\chi} G.
\ed
\end{thm}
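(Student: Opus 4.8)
# Proof Proposal for Theorem \ref{kempf-ness thm}

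The plan is to follow the classical Kempf--Ness strategy, using the modified K\"ahler potential $F_1$ of Corollary \ref{proper-bdd-below} as the function whose critical points along complex group orbits detect polystability. First I would fix a point $m\in\mu_{\mathbb{C}}\inv(0)$ and consider the function $p_m: G\to\mathbb{R}$ defined by $p_m(g) = F_1(g\cdot m)$; since $F_1$ is a K\"ahler potential and the real moment map $\mu_{\mathbb{R}}$ is (up to the standard normalization) the derivative of $F_1$ along the $K$-action, the restriction of $p_m$ to $\exp(i\mathfrak{k})$ is convex, and its critical points are exactly the points where $\mu_{\mathbb{R}}$ takes the value $-i\lambda$ (the shift by $\lambda$ coming from twisting the potential by the character $\chi$, i.e.\ replacing $F_1$ by $F_1 - \langle\log|\chi|, \cdot\rangle$ appropriately, which remains proper and bounded below because $\chi$ contributes at most linear growth). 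The key analytic input is that $p_m$ attains its infimum on $G\cdot m$: this is where properness and boundedness below of $F_1$ (Corollary \ref{proper-bdd-below}) enter, but one must be careful because $G$ is noncompact and $G\cdot m$ need not be closed in $\mathsf{M}$.

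The central step, and the one I expect to be the main obstacle, is the \emph{convergence of downward gradient (Morse) flow} for $p_m$ on the orbit $G\cdot m$ when $m$ is $\chi$-semistable. Concretely: given $m\in\mu_{\mathbb{C}}\inv(0)$ whose $G$-orbit closure in the GIT sense meets the semistable locus appropriately, one runs the negative gradient flow of $F_1$ (equivalently the $K$-invariant flow generated by $i\mu_{\mathbb{R}}$) starting at $m$ and must show the flow converges to a limit point $m_\infty$, necessarily lying in $\mu_{\mathbb{C}}\inv(0)\cap\mu_{\mathbb{R}}\inv(-i\lambda)$ and in the closure of $G\cdot m$. The subtlety flagged in the introduction is precisely that this convergence is not automatic in infinite-generality and must be verified using the specific geometry of $T^*\mathbb{G}\times T^*V$ with the Kronheimer metric; here one would exploit the explicit formula \eqref{Kahler potential fmla}, $F_1 = \int_0^1 |\operatorname{Im}(\alpha)|^2 + \tfrac12|\beta|^2 + |y|^2$, together with the description \eqref{Lie algebra function} of the $G$-action on Nahm data, to get a {\L}ojasiewicz-type inequality or a direct estimate (along the lines of \cite{Mayrand}, \cite{NT}, \cite{Takayama}) forcing the flow to have bounded length and hence to converge. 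Once convergence is established, standard arguments show the limit is a critical point, so every semistable orbit contains a point of $\mu_{\mathbb{R}}\inv(-i\lambda)$.

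With the flow convergence in hand, the remaining steps are formal. First, the map $\mu_{\mathbb{C}}\inv(0)\cap\mu_{\mathbb{R}}\inv(-i\lambda)\hookrightarrow\mu_{\mathbb{C}}\inv(0)$ lands in the $\chi$-semistable locus: a point where $\mu_{\mathbb{R}} = -i\lambda$ is a minimum of $p_m$ along $G\cdot m$, and convexity of $p_m|_{\exp(i\mathfrak{k})}$ together with properness rules out the existence of a destabilizing one-parameter subgroup (the Hilbert--Mumford numerical criterion translates, via $p_m$, into the statement that $p_m$ is bounded below on $G\cdot m$). Second, injectivity of the induced map on quotients: if two points of $\mu_{\mathbb{C}}\inv(0)\cap\mu_{\mathbb{R}}\inv(-i\lambda)$ are $G$-equivalent, convexity forces them to be $K$-equivalent, since the set of minima of $p_m$ on a single orbit is a single $K$-orbit (uniqueness of the critical point modulo the maximal compact, again from strict convexity transverse to the stabilizer). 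Third, surjectivity onto the GIT quotient: every $\chi$-polystable orbit in $\mu_{\mathbb{C}}\inv(0)$ meets $\mu_{\mathbb{R}}\inv(-i\lambda)$ by the flow-convergence step, and $\chi$-semistable points are identified in the GIT quotient with the polystable point in the closure of their orbit, which the flow produces. Finally, to upgrade the resulting continuous bijection to a homeomorphism of topological stacks and a complex-analytic isomorphism of the quotient spaces, I would invoke properness of $F_1$ to get properness of the relevant maps (hence that the bijection is a homeomorphism), and then note that both sides carry natural complex-analytic structures — the GIT quotient as $\operatorname{Proj}$ of the invariant ring, the hyperk\"ahler quotient via the Kempf--Ness slice — with the map being holomorphic because the Kempf--Ness slice theorem identifies a neighborhood of each point with a local model compatible on both sides; this is where references such as \cite{Sjamaar}, \cite{Hoskins}, \cite{Mayrand} supply the standard local structure results, and the argument is by now routine once the global flow convergence is known.
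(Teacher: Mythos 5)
Your scaffolding matches the paper's: twist the potential by the character (the paper does this by adding a $\mathbb{C}$-factor with potential $\tfrac12\log|z|^2$ rather than subtracting a linear term, but these are equivalent), identify critical points of the restricted potential with $\mu_{\mathbb{R}}\inv(-i\lambda)$, use Azad--Loeb convexity for uniqueness of the minimizing $K$-orbit, and quote Sjamaar-type results for the stack/analytic upgrade. The gap is in the one step that is not routine, and which you yourself flag as the main obstacle: you never actually establish that the twisted function attains its minimum on each closed orbit, and the one justification you do offer for it is incorrect as stated. You write that the twisted potential ``remains proper and bounded below because $\chi$ contributes at most linear growth,'' citing Corollary \ref{proper-bdd-below}. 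But properness and boundedness below of $F_1$ on $\mathsf{M}$ give no lower bound on its \emph{growth rate} along a ray $s\mapsto e^{isX}\cdot m$ inside a $G$-orbit: a proper function can grow arbitrarily slowly, in which case subtracting a term linear in $s$ (which is what the character contributes when $\lambda(iX)<0$) could destroy both properness and boundedness below on the orbit. Exactly this is the content the paper has to supply: using the explicit formula \eqref{Kahler potential fmla} and the continuous diagonalization of the gauge action (Proposition \ref{continuous diagonalization}, Lemma \ref{imaginary estimate}, Proposition \ref{estimate}), one shows that along any ray $e^{isX}$ the potential is either bounded or grows at least quadratically (in fact exponentially in most directions), and the bounded alternative is excluded for closed orbits by a limit-point argument (Proposition \ref{growth estimates prop}, where closedness of $\theo$ enters essentially); only then does the linear character term become harmless.

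Your alternative of running the downward gradient flow and invoking a {\L}ojasiewicz-type inequality does not sidestep this: to apply such an inequality and conclude convergence one must first confine the flow to a compact set, which again requires properness of the \emph{twisted} potential on the orbit (or some substitute), i.e.\ the very estimate above. There is also a second, smaller omission: even granting superlinear growth along each individual ray, properness of $F|_{\theo}$ does not follow ray-by-ray; one needs the growth to be uniform over directions, which the paper obtains via Mostow coordinates $K\times_H\mathfrak{m}\cong\theo$, the uniformity statement of Remark \ref{uniformity remark}, and compactness of the unit sphere in $\mathfrak{m}$ in the proof of Proposition \ref{bbp}. So while your outline correctly locates the difficulty and the formal steps surrounding it are fine, the proposal leaves the actual theorem-specific content --- the superlinear growth estimate for the Kronheimer potential under one-parameter gauge transformations, its interaction with closedness of the orbit, and the uniformization over directions --- entirely unproven.
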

\begin{remark}
As in the introduction, we remark that the (closely related) theorem was known to experts (cf. Sections 2.2 and 3.3 of \cite{NT}). 
Also as remarked in the introduction, the proof follows the usual approach: standard methods and results (see \cite{Sjamaar}) immediately reduce the proof to showing a certain assertion about limits of downward Morse flows, which is the main content of Section \ref{kempf-ness proof} and which does not seem to have been previously documented in the literature.
\end{remark}

It follows that we get a commutative diagram of equivariant cohomology groups
\begin{equation}\label{comm diagram}
\xymatrix{
H^*_G\big(\mathsf{M},\mathbb{Q}) \ar[d]\ar[r]^{\cong} &  H^*_K(\mathsf{M},\mathbb{Q})\ar[d] \\
H^*_G\big(\mu_{\mathbb{C}}\inv(0)^{\chi-\on{ss}}\big) \ar[r]^{\hspace{-1em}\cong} &  H^*_K(\mu_{\mathbb{C}}\inv(0)\cap \mu_{\mathbb{R}}\inv(\xi),\mathbb{Q}).
}
\end{equation}
We use the identifications in diagram \eqref{comm diagram} to reduce Theorem \ref{main thm} to the corresponding assertion about the algebraic-symplectic Kirwan map, which we analyze in Section \ref{main proof}.  The proof of Theorem \ref{kempf-ness thm} appears in Section \ref{kempf-ness proof}.

\section{Proof of Theorem \ref{kempf-ness thm}}\label{kempf-ness proof}
As explained above, we equip $\mathsf{M} = T^*(\mathbb{G}\times V)$ with the product of the Kronheimer hyperk\"ahler metrics on $T^*\mathbb{G}$ and the flat hyperk\"ahler metric on $T^*\mathbb{C}^n$.  

\subsection{K\"ahler Potential and Moment Map Pre-Image}
As above, we write $F_1: T^*\mathbb{G}\times T^*V\rightarrow \mathbb{R}$ for the K\"ahler potential on $\mathsf{M}$ given by $F_1(x,y) = \mu_K(x) + h(x) + \overline{h}(x) +|y|^2$.  We further equip $\mathbb{C}$ with the singular K\"ahler metric (the ``lifted Fubini-Study metric'') with K\"ahler potential $F_2(z) =  \frac{1}{2}\log |z|^2$.  We define 
$F: T^*\mathbb{G}\times T^*V\times \mathbb{C}\rightarrow \mathbb{R}$, 
\begin{equation}\label{Kahler potential}
F(x,y,z) = F_1(x,y) + F_2(z) = \big(\mu_K(x)  + h(x) + \overline{h}(x)\big) + |y|^2 + \frac{1}{2}\log|z|^2.
\end{equation}

We let $G\subset \mathbb{G}\times\mathbb{G}$ act on $\mathbb{C}$ by 
$g\cdot z = \chi(g)z$, and give $T^*\mathbb{G}\times T^*V \times \mathbb{C}$ the product action of $G$.   Then $K$ acts preserving the K\"ahler metric, and the K\"ahler potentials $F_1$ and  $F$ are evidently $K$-invariant. 
As above, write $\lambda = d\chi: \mathfrak{g}\rightarrow\mathbb{C}$.

Choose $(x,y,z) \in T^*\mathbb{G}\times T^*V\times (\mathbb{C}\smallsetminus\{0\})$, and let $\theo = G\cdot (x,y,z)$ be its $G$-orbit.  We use:
\begin{thm}[\cite{Mostow}, Theorem~4.1]\label{mostow}
Let $H\subset K$ be a closed subgroup of a compact connected Lie group $K$.  Then there is an $H$-invariant subspace $\mathfrak{m}\subset\mathfrak{k}$ for which the map $(k,v)\mapsto ke^{iv}$ induces a diffeomorphism $K\times_H \mathfrak{m}\rightarrow K_{\mathbb{C}}/H_{\mathbb{C}}$.
\end{thm}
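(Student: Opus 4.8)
The statement to prove is the Mostow decomposition $K \times_H \mathfrak{m} \xrightarrow{\sim} K_{\mathbb{C}}/H_{\mathbb{C}}$ (Theorem~\ref{mostow} in the excerpt). Since the authors explicitly attribute it to Mostow and cite ``Theorem 4.1'' of \cite{Mostow}, the ``proof'' here is really a matter of deducing the precise statement needed from the structural results on reductive groups, so I would organize the argument around the Cartan decomposition. First I would fix a maximal compact $K$ of the complex reductive group $K_{\mathbb{C}}$ so that $K_{\mathbb{C}} = K \cdot \exp(i\mathfrak{k})$ with the polar map $K \times i\mathfrak{k} \to K_{\mathbb{C}}$, $(k, v) \mapsto k e^{v}$, a diffeomorphism (the global Cartan decomposition for reductive groups over $\mathbb{C}$). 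The corresponding statement for the subgroup: $H$ is compact in $K$, its complexification $H_{\mathbb{C}}$ is a closed reductive subgroup, and $H_{\mathbb{C}} = H \cdot \exp(i\mathfrak{h})$, compatibly embedded.

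\medskip

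\emph{Key steps, in order.} (1) Record the Cartan decomposition $K_{\mathbb{C}} \cong K \times i\mathfrak{k}$ via $(k,v)\mapsto ke^{v}$, and its $H$-equivariance: $K$ acts on the left, and there is a right $H$-action $h\cdot(k,v) = (kh^{-1}, \operatorname{Ad}_h v)$ matching right multiplication by $H \subset H_{\mathbb{C}}$ on $K_{\mathbb{C}}$. (2) Choose $\mathfrak{m}$: since $H$ is compact and acts on $\mathfrak{k}$ by $\operatorname{Ad}$, the subspace $i\mathfrak{h} \subset i\mathfrak{k}$ has an $\operatorname{Ad}(H)$-invariant complement, call it $i\mathfrak{m}$, so $i\mathfrak{k} = i\mathfrak{h} \oplus i\mathfrak{m}$ as $H$-modules; this defines the $H$-invariant $\mathfrak{m} \subset \mathfrak{k}$. (3) Show the map $K \times \mathfrak{m} \to K_{\mathbb{C}}/H_{\mathbb{C}}$, $(k,v)\mapsto k e^{iv} H_{\mathbb{C}}$, descends to $K \times_H \mathfrak{m}$ and is a diffeomorphism: surjectivity uses that any $gH_{\mathbb{C}} $ can be written $g = k e^{w}$ with $w \in i\mathfrak{k}$, then one adjusts by $H_{\mathbb{C}} = H\exp(i\mathfrak{h})$ to absorb the $i\mathfrak{h}$-component of $w$ — this is where one needs a transversality/decomposition argument that $e^{i\mathfrak{m}} \cdot H_{\mathbb{C}} = K_{\mathbb{C}}$ up to the $K$-factor. (4) Injectivity modulo $H$ and the fact that it is a local diffeomorphism: compute the differential at a point and check it is an isomorphism using that $\mathfrak{m}$ and $\mathfrak{h}$ are complementary; conclude it is a diffeomorphism by a properness argument (both sides are manifolds of the same dimension, and the map is proper because the Cartan decomposition is).

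\medskip

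\emph{Main obstacle.} The genuinely nontrivial point — and the reason the authors cite Mostow rather than just citing ``Cartan decomposition'' — is step (3)'s surjectivity: given $g = k e^{w}$ with $w \in i\mathfrak{k}$ arbitrary, one must produce $h \in H_{\mathbb{C}}$ and $v \in \mathfrak{m}$ with $g h = k' e^{iv}$ for some $k' \in K$. Writing $w = i\mathfrak{h}$-part plus $i\mathfrak{m}$-part does \emph{not} immediately work because $\exp$ is not additive on $i\mathfrak{k}$ and the two pieces do not commute. Mostow's actual argument handles this via a fixed-point / convexity statement: the function $h \mapsto \|g h\|^2$ (norm from a $K$-invariant metric, or equivalently the distance in the symmetric space $K_{\mathbb{C}}/K$) is convex along geodesics in the totally geodesic submanifold $H_{\mathbb{C}}/H$ and proper, hence attains a unique minimum, and at the minimizer the ``$i\mathfrak{h}$-component'' of the polar part of $gh$ vanishes — leaving exactly an element of $e^{i\mathfrak{m}}$ up to $K$. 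So in the write-up I would either (a) invoke \cite{Mostow} directly for this and only spell out why the complement $\mathfrak{m}$ can be taken $H$-invariant and why the induced map is then a diffeomorphism, or (b) reproduce the convexity-of-distance argument in the symmetric space. Given the paper's stated philosophy (``useful to have a complete proof in the literature''), option (b) is more in keeping with the rest of the section, but since the excerpt cites Mostow as a black box at exactly this point, I expect the intended proof here is simply to quote Theorem~4.1 of \cite{Mostow} and move on — the real work of the section being the Morse-flow convergence, not this lemma.
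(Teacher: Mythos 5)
You are correct: the paper gives no proof of this statement at all --- it is quoted verbatim as Theorem~4.1 of \cite{Mostow} and used as a black box, exactly as you anticipated in your final sentence. Your sketch of how one would actually prove it (Cartan decomposition of $K_{\mathbb{C}}$, an $\operatorname{Ad}(H)$-invariant complement $\mathfrak{m}$, and the convexity-of-distance argument in the symmetric space $K_{\mathbb{C}}/K$ to handle surjectivity) is a sound account of the standard argument, but none of it is needed to match the paper, whose real work lies elsewhere in Section~3.
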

Assume that the orbit $\theo$ is closed in $T^*\mathbb{G}\times T^*V\times\mathbb{C}$: in other words, that the point $(x,y)\in T^*\mathbb{G}\times T^*V$ is a $\chi$-semistable point of 
$T^*\mathbb{G}\times T^*V$ under the action of $G$.
Then the orbit $\theo$ is affine, and it follows that the stabilizer $G_{(x,y,z)}$ is a reductive subgroup of $G$.  Choosing an appropriate $(x,y,z)\in \theo$, we may assume chosen, without loss of generality, a maximal compact subgroup $H$ of $G_{(x,y,z)}$ that is also a subgroup of $K$.  Mostow's Theorem \ref{mostow} yields a diffeomorphism
\begin{equation}\label{coords on O}
K\times_H \mathfrak{m} \xrightarrow{\cong} \theo, \hspace{2em} (k,v)\mapsto ke^{iv}\cdot (x,y,z).
\end{equation}

For $X\in\mathfrak{g}$, write $\wt{X}$ for the induced vector field on $\theo$.  
\begin{lemma}\label{moment map lemma}
For any $q = g\cdot (x,y)\in \mathsf{M}$, $p = g\cdot (x,y,z)\in \theo$ and $X\in\mathfrak{k}$, we have 
\bd
dF_p(\wt{iX}) = \langle \mu_{\mathbb{R}}(q),X\rangle + \lambda(iX).
\ed
\end{lemma}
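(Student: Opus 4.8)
The plan is to compute $dF_p(\wt{iX})$ term by term, using the product decomposition $F = F_1 + F_2$ and the explicit forms of each potential. The key principle is the standard fact that if $F$ is a K\"ahler potential for a K\"ahler form $\omega$ with a Hamiltonian $K$-action preserving $F$, then the moment map is given by $\langle\mu,X\rangle = \frac{1}{2}d^cF(\wt{X}) = -\frac{1}{2}dF(\wt{JX})$ (with appropriate sign and normalization conventions); equivalently, $dF_p(\wt{iX})$ for $X \in \mathfrak{k}$ computes (twice) the relevant moment map component since $iX = JX$ under the complex structure $I$. First I would recall this computation abstractly: for the $K$-invariant potential $F$, the one-form $d^cF$ satisfies $dd^cF = \omega$ up to a constant, and contracting with $\wt{X}$ yields the moment map; the fact that $F$ is $K$-invariant (Lemma \ref{invariance} for the $\mu_K + h + \overline{h}$ piece, $K$-invariance of the Hermitian norm for $|y|^2$, and the obvious invariance of $\frac{1}{2}\log|z|^2$ under $U(1)$) is what makes $d^cF(\wt{X})$ a genuine moment map rather than just a closed form.

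Next I would treat the three summands separately. For $F_1(x,y) = \mu_K(x) + h(x) + \overline{h}(x) + |y|^2$: the holomorphic term $h$ contributes nothing to the real moment map since $d^c$ of a holomorphic function contracted against a real vector field in the image of the $K$-action gives an imaginary quantity that cancels with $\overline{h}$; more carefully, $h + \overline{h} = 2\on{Re}(h)$ and since $h$ is holomorphic and $K$-invariant, $d^c(h+\overline{h})(\wt{X}) = 0$ for $X \in \mathfrak{k}$. So the $T^*\mathbb{G}\times T^*V$ contribution of $\mu_K + |y|^2$ recovers exactly $\langle\mu_{\mathbb{R}}(q), X\rangle$ — this is precisely the defining property of $\mu_{\mathbb{R}}$ as the real hyperk\"ahler moment map for the Kronheimer-plus-flat metric with K\"ahler potential $\mu_K + |y|^2$, which is where I would invoke the Kronheimer/Dancer--Swann construction and the HKLR description of $\mu_K$ recalled in Section \ref{sec:hk structure}. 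For $F_2(z) = \frac{1}{2}\log|z|^2$ with the action $g\cdot z = \chi(g)z$: a direct computation on $\mathbb{C}\smallsetminus\{0\}$ shows that the moment map for the $U(1)$-action (weight given by $\lambda$) with respect to the potential $\frac{1}{2}\log|z|^2$ is the \emph{constant} $\lambda(iX)$ (the lifted Fubini--Study / singular metric has constant moment map for the scaling action — this is the usual trick for realizing GIT stability via symplectic reduction). Adding the two contributions gives the claimed formula.

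The main obstacle, I expect, is bookkeeping of conventions: the factor of $\frac{1}{2}$ in $F_2$, the normalization relating $dF(\wt{iX})$ to $\langle\mu,X\rangle$, the identification $-i\lambda \in \mathfrak{k}^*$ versus $\lambda(iX)$, and the sign in the complex structure $I$ (note the diffeomorphism in Section \ref{sec:hk structure} sends $(T_0,T_1,T_2,T_3) \mapsto (T_0 - iT_1, T_2+iT_3)$, with a minus sign), all need to be tracked so that the three pieces assemble with consistent signs. A secondary subtlety is that $p$ lies in the orbit $\theo$, which is only a submanifold; but since $F$ and all the moment maps are defined on the ambient manifold and the vector field $\wt{iX}$ is the restriction of the ambient one, the computation can be carried out ambiently and then restricted, so this causes no real difficulty. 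The essential content — that $h + \overline{h}$ drops out and that the $\log|z|^2$ term produces exactly the constant $\lambda(iX)$ — is elementary once the framework is set up; the Mostow coordinates \eqref{coords on O} are not actually needed for this particular lemma (they will be used later, e.g. to analyze the Morse flow), so I would not invoke them here.
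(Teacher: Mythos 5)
Your proposal is correct and follows essentially the same route as the paper: split $F=F_1+F_2$, get $(dF_1)_q(\wt{iX})=\langle\mu_{\mathbb{R}}(q),X\rangle$ from the standard K\"ahler-potential/moment-map relation (the paper simply cites \cite[Proposition~4.1]{Mayrand} for this, rather than separately arguing that $h+\overline{h}$ drops out as you do), and compute the $\tfrac{1}{2}\log|z|^2$ term directly via $\tfrac{d}{ds}\log|\chi(e^{isX})|^2\big|_{s=0}=2\lambda(iX)$. The only caution is that the identification of the potential-derived quantity with the canonical $\mu_{\mathbb{R}}$, including its normalization, is not a ``defining property'' but precisely the content of the cited result, which is how the paper settles the constants you flag as bookkeeping.
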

\begin{proof}
Using $dF = dF_1 + \frac{1}{2}d\log |z|^2$, we get $(dF_1)_q(\wt{iX}) = \langle \mu_{\mathbb{R}}(q),X\rangle$ from \cite[Proposition~4.1]{Mayrand}.  Now observe that since $X\in\mathfrak{k}$, we have $\chi(e^{isX}) = e^{\lambda(iX)s}$ where $\lambda(iX)\in\mathbb{R}$.  Then
 $\frac{d}{ds}\log|\chi(e^{isX})|^2|_{s=0} = 2\lambda(iX)$, yielding the desired result.
\end{proof}
\begin{prop}\label{critical points prop}
\mbox{}
\begin{enumerate}
\item A point $p\in\theo$ is a critical point of $F|_{\theo}$ if and only if $p\in \mu_{\mathbb{R}}\inv(-i\lambda)$.
\item Any critical point of $F|_{\theo}$ is a minimum. 
\item The critical locus of $F|_{\theo}$ is either empty, or consists of a single $K$-orbit. 
\end{enumerate}
\end{prop}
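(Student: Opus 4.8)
The plan is to exploit the explicit description of the orbit $\theo$ given by Mostow's Theorem in the coordinates \eqref{coords on O}, reducing everything to the behavior of the function $v\mapsto F(e^{iv}\cdot(x,y,z))$ on the vector space $\mathfrak{m}$ (and using $K$-invariance to handle the $K$-direction). The first step is to record that, by Lemma \ref{invariance}, the $K$-invariance of $|y|^2$, and the $K$-invariance of $\frac{1}{2}\log|z|^2$ (since $|\chi(k)|=1$ for $k\in K$), the function $F|_\theo$ is $K$-invariant; hence $F|_\theo$ descends to a function on $\theo/K \cong \mathfrak{m}/H$, and it suffices to analyze $\phi(v) := F(e^{iv}\cdot(x,y,z))$ as a function of $v\in\mathfrak{m}$. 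A critical point of $F|_\theo$ is the same as a $K$-orbit on which $\phi$ has vanishing derivative in all $\mathfrak{m}$-directions, but combined with Lemma \ref{moment map lemma} we get more: for $p = ke^{iv}\cdot(x,y,z)$, the derivative of $F|_\theo$ in the direction $\wt{iX}$ ($X\in\mathfrak{k}$) is $\langle \mu_{\mathbb{R}}(q),X\rangle + \lambda(iX) = \langle \mu_{\mathbb{R}}(q) + i\lambda, X\rangle$ (reading $-i\lambda\in\mathfrak{k}^*$), and since the tangent space to $\theo$ at $p$ is spanned by such $\wt{iX}$ together with $\wt{X}$ for $X\in\mathfrak{k}$ — and $dF_p(\wt X)=0$ by $K$-invariance — the point $p$ is critical iff $\mu_{\mathbb{R}}(q) = -i\lambda$. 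That proves (1).

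For (2) and (3), the key is \emph{convexity}: I claim that $t\mapsto \phi(tv)$ is a convex function of $t\in\mathbb{R}$ for each fixed $v\in\mathfrak{m}$, in fact strictly convex modulo the directions tangent to the stabilizer. This is the standard Kempf--Ness convexity of the Kähler potential along geodesics $e^{itv}$ of the symmetric-space type; for the three factors it follows from: (a) geodesic convexity of $\mu_K + h + \bar h$ along such one-parameter subgroups, which one can extract either from the $S^1$-equivariant moment-map description (via \cite{HKLR}) together with the formula \eqref{Kahler potential fmla} for the potential, or more directly from \cite{Mayrand}; (b) convexity of $v\mapsto |e^{iv}\cdot y|^2$ on a linear representation with $K$-invariant Hermitian metric, which is elementary since $|e^{iv}\cdot y|^2 = \sum_\mu e^{2\mu(v)}|y_\mu|^2$ after decomposing $y$ into weight vectors for the torus $\exp(i\mathbb{R}v)$ (hence a sum of exponentials, each convex); and (c) convexity of $v\mapsto \frac{1}{2}\log|e^{iv}\cdot z|^2 = \frac12\log|z|^2 + \lambda(iv)$, which is in fact \emph{linear} (this is the reason the $\log$ potential on $\mathbb{C}$ is the right choice). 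A convex function on the vector space $\mathfrak{m}$ that attains a critical point attains its minimum there, giving (2); and the critical locus, being the set where the gradient of a convex function vanishes, is convex, and — because along any ray where $\phi$ is not strictly convex the representation-theoretic analysis forces the corresponding one-parameter subgroup to fix $(x,y,z)$, i.e. to lie in $\mathfrak{g}_{(x,y,z)}$, whose compact part $H$ we have arranged to sit inside $K$ — the critical locus downstairs in $\mathfrak{m}/H$ is a single point, hence a single $K$-orbit upstairs. That gives (3).

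The main obstacle, and the part requiring genuine care rather than bookkeeping, is establishing the geodesic convexity in (a) for the Kronheimer factor $T^*\mathbb{G}$, since — as the authors emphasize in the introduction — the Kähler potential there is relatively inexplicit (given only as the integral $\int_0^1|T_1|^2 + \tfrac12|\beta|^2$ over solutions of Nahm's equations). The clean way is to note that the left-right $K\times K$-action extends to a $G\times G$-action realized by the functions $g(t) = e^{\mathsf{Y}(t)}$ of \eqref{Lie algebra function}, so $e^{itv}$ acts through a genuine path in the complex gauge group; then geodesic convexity of a Kähler potential along complexified group directions is a general fact about Kähler metrics arising from Kähler potentials (the Hessian of $\phi$ in the $iv$-direction equals $\|\wt{v}\|^2 \ge 0$ with equality iff $\wt v = 0$), provided one knows the relevant moment-map identity — which is exactly \cite[Proposition~4.1]{Mayrand} as already invoked in Lemma \ref{moment map lemma}. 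So in fact (2) and (3) can be deduced uniformly: differentiate $t\mapsto dF_{e^{itv}\cdot p}(\wt{iv})$ to get $\tfrac{d^2}{dt^2}\phi(tv) = \|\wt{v}_{e^{itv}\cdot p}\|^2 \ge 0$, with strict inequality unless $v$ generates a subgroup fixing the point; the case analysis on the stabilizer then closes the argument. I would structure the write-up around this single Hessian computation, citing \cite{Mayrand} for the one input (the real moment map being $dF_1$) that the inexplicitness of $\mu_K$ would otherwise obstruct.
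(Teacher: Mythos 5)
Your part (1) matches the paper's argument exactly: it is read off from Lemma \ref{moment map lemma} together with $K$-invariance of $F$. For (2) and (3) the paper does not argue at all --- it simply cites Lemma 1 and the Proposition of \cite{AL}, which are precisely the statements that critical points of a $K$-invariant plurisubharmonic potential on a complexified orbit are minima and that the critical set is a single $K$-orbit. What you propose is in effect a self-contained re-proof of those two facts via the Hessian identity $\frac{d^2}{dt^2}F(e^{itv}\cdot p)=\|\wt v\|^2$ obtained by differentiating the formula of Lemma \ref{moment map lemma}; that identity is correct, it is the real content behind the citation, and your observation that the only nontrivial input is \cite[Proposition~4.1]{Mayrand} (so that no explicit convexity analysis of the Kronheimer potential is needed) is exactly right.

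There is, however, a concrete gap in your (3). Since the potential on the last factor is $\tfrac12\log|z|^2$, which is pluriharmonic on $\mathbb{C}^*$, the Hessian you compute is the squared norm of the induced vector field on $T^*\mathbb{G}\times T^*V$ only; its vanishing along the segment forces $v\in\mathfrak{g}_{(x,y)}$, \emph{not} $v\in\mathfrak{g}_{(x,y,z)}$ as you assert. If such a $v$ had $\lambda(iv)\neq 0$, the two critical points $(x,y,z)$ and $e^{iv}\cdot(x,y,z)=(x,y,e^{\lambda(iv)}z)$ would differ by a positive real scaling of $z$ and could not lie in one $K$-orbit, since $|\chi(k)|=1$ for $k\in K$. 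You must exclude this case separately, e.g.\ using the standing closedness assumption: if $v\in\mathfrak{g}_{(x,y)}$ and $\lambda(iv)\neq0$, then $\exp(i\mathbb{R}v)\cdot(x,y,z)$ accumulates at $(x,y,0)$, which cannot belong to the closed orbit $\theo$ because every point of $\theo$ has nonzero last coordinate. Relatedly, your claim that ``the critical locus, being the zero set of the gradient of a convex function, is convex'' is not justified: you only establish convexity of $v\mapsto F(e^{iv}\cdot p)$ along lines through the origin, not along arbitrary segments of $\mathfrak{m}$. The standard repair is the two-endpoint argument: given critical points $p$ and $p'=ke^{iv}p$, convexity of $t\mapsto F(e^{itv}p)$ with vanishing derivative at $t=0$ and $t=1$ forces the Hessian to vanish on $[0,1]$, and then the stabilizer-plus-closedness analysis above places $p'$ in $Kp$. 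With these two repairs your argument is complete and recovers what the paper obtains by citing \cite{AL}.
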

\begin{proof}
Assertion (1) is immediate from Lemma \ref{moment map lemma}. 
Assertion (2) is immediate from Lemma 1 of \cite{AL}.  Assertion (3) follows from the Proposition of \cite{AL}.
\end{proof}

\begin{prop}\label{quotient diffeo}
Suppose that for every closed orbit $\theo = G\cdot (x,y,z)$ for $z\neq 0$, the function $F|_{\theo}$ is proper and bounded below.  Then the map $\mu_{\mathbb{C}}\inv(0)\cap \mu_{\mathbb{R}}\inv(-i\lambda) \rightarrow \mu_{\mathbb{C}}\inv(0)^{\on{det-ss}}$ induces a diffeomorphism
\begin{equation}\label{eq:orbit spaces}
\big(\mu_{\mathbb{C}}\inv(0)\cap \mu_{\mathbb{R}}\inv(-i\lambda)\big)/K 
\longrightarrow
\mu_{\mathbb{C}}\inv(0)/\!\!/_\chi G.
\end{equation}
\end{prop}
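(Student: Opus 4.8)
The plan is to use the hypothesis (properness and boundedness below of $F|_{\theo}$ on each closed orbit with $z \neq 0$) together with Proposition \ref{critical points prop} to run the standard Kempf–Ness argument, now with the extra $\mathbb{C}$-factor carrying the character $\chi$ used to linearize. First I would recall that, since $F|_{\theo}$ is proper and bounded below, it attains a minimum on $\theo$; by Proposition \ref{critical points prop}(2) every critical point is a minimum, and by part (3) the critical set — equivalently, by part (1), $\theo \cap \mu_{\mathbb{R}}\inv(-i\lambda)$ (intersected with $z\neq 0$) — is a single $K$-orbit whenever it is nonempty. The content then splits into three checks. \emph{Existence}: every $\chi$-semistable point $(x,y)$ of $\mu_{\mathbb{C}}\inv(0)$ lies (after translating by $G$) in $\mu_{\mathbb{R}}\inv(-i\lambda)$. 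To see this, choose $z \neq 0$; $\chi$-semistability of $(x,y)$ means precisely that the orbit $\theo = G\cdot(x,y,z)$ is closed in $T^*\mathbb{G}\times T^*V\times\mathbb{C}$ (this is the standard Hilbert–Mumford translation of GIT semistability with respect to $\chi$ into closedness in the total space of the linearization with the zero section removed). Hence the hypothesis applies, $F|_{\theo}$ has a minimum, and that minimum lies in $\mu_{\mathbb{R}}\inv(-i\lambda)$ by Proposition \ref{critical points prop}(1); its $T^*\mathbb{G}\times T^*V$-component is $G$-conjugate to $(x,y)$ and still lies in $\mu_{\mathbb{C}}\inv(0)$, since $\mu_{\mathbb{C}}$ is $G$-equivariant and $G$-invariantly zero along the orbit.

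\emph{Uniqueness up to $K$}: if $g\cdot(x,y)$ and $g'\cdot(x,y)$ both lie in $\mu_{\mathbb{C}}\inv(0)\cap\mu_{\mathbb{R}}\inv(-i\lambda)$, then lifting to the orbit $\theo$ (with any fixed $z\neq 0$), both lifts are critical points of $F|_{\theo}$, hence by Proposition \ref{critical points prop}(3) lie in a single $K$-orbit; projecting back to $T^*\mathbb{G}\times T^*V$ shows $g\cdot(x,y)$ and $g'\cdot(x,y)$ differ by an element of $K$. Combined with the existence step, this shows the map \eqref{eq:orbit spaces} is a well-defined bijection: each $G$-orbit of a $\chi$-semistable point meets $\mu_{\mathbb{C}}\inv(0)\cap\mu_{\mathbb{R}}\inv(-i\lambda)$ in exactly one $K$-orbit. \emph{Smoothness}: that the bijection and its inverse are smooth is the routine part — one uses the Mostow coordinates \eqref{coords on O}, i.e. the slice $K\times_H\mathfrak{m} \cong \theo$, in which $F|_{\theo}$ becomes, after projecting out $K$, a proper function on $\mathfrak{m}$ whose Hessian at the critical point (which exists and is a minimum) is nondegenerate transverse to the $K$-direction by the convexity input of \cite{AL} used in Proposition \ref{critical points prop}; standard implicit-function-theorem arguments (as in \cite{Sjamaar, Hoskins}) then identify the quotient differentiably. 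One also checks that the closed-orbit locus in $\mu_{\mathbb{C}}\inv(0)\times(\mathbb{C}\setminus\{0\})$ maps onto exactly the $\chi$-semistable locus and that $G$-orbit closures and $K$-orbits match up, so that the induced map on topological quotients is a homeomorphism and, with the smooth structures, a diffeomorphism.

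The main obstacle is the \emph{existence} step, and specifically the identification of "$(x,y)$ is $\chi$-semistable'' with "$G\cdot(x,y,z)$ is closed in $T^*\mathbb{G}\times T^*V\times\mathbb{C}$ for $z\neq 0$'', because the GIT here is GIT of the affine variety $\mu_{\mathbb{C}}\inv(0)$ with respect to a character, realized concretely via the extra $\mathbb{C}$; making this precise requires care that $\mu_{\mathbb{C}}\inv(0)$ is $G$-invariant and that closedness in the larger space $\mathsf{M}\times\mathbb{C}$ detects closedness in $\mu_{\mathbb{C}}\inv(0)\times\mathbb{C}$. The other genuine input — not an obstacle here since it is exactly the hypothesis of the proposition — is the properness of $F|_{\theo}$; the whole point of the present section is that \emph{this} properness (a statement about convergence of downward Morse/gradient flows on $T^*\mathbb{G}$ with the Kronheimer metric) is what still needs to be established, and is taken up next. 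Given it, everything above is the standard Kempf–Ness package as in \cite{Kempf-Ness, Kirwan, AL, Sjamaar, Hoskins, Mayrand, NT}.
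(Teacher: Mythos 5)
Your proposal follows essentially the same route as the paper's proof: properness and boundedness below give a minimum on each closed orbit with $z\neq 0$, Proposition \ref{critical points prop}(1)--(3) identify the critical locus with a single $K$-orbit in $\mu_{\mathbb{R}}\inv(-i\lambda)$, yielding the bijection, with smoothness handled by the standard Kempf--Ness/slice arguments. The only caveat is your phrasing that $\chi$-semistability of $(x,y)$ ``means precisely'' that $G\cdot(x,y,z)$ is closed (strictly this characterizes polystability, the closed orbits parametrized by the GIT quotient), but the paper adopts the same identification, so this is not a substantive deviation.
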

\begin{proof}
It follows from the hypothesis that each closed orbit $G\cdot (x,y,1)$ of $T^*\mathbb{G}\times T^*V\times\mathbb{C}$ contains a critical point.  Proposition \ref{critical points prop}(3) then implies that each closed orbit contains a unique $K$-orbit of critical points.  It thus follows by Proposition \ref{critical points prop}(1) that a $\chi$-semistable orbit $G\cdot (x,y)\subset T^*\mathfrak{G}\times T^*V$ contains a unique $K$-orbit in $\mu_{\mathbb{R}}\inv(-i\lambda)$.  The map \eqref{eq:orbit spaces} is thus a $C^\infty$ bijection.  That it is a diffeomorphism is immediate by standard arguments. 
\end{proof}

Unfortunately, we do not know a general result that would establish the hypothesis of Proposition \ref{quotient diffeo} without some specific information about the K\"ahler potential $F$---though one might hope that a general result should exist.
Thus, in Section \ref{properness subsection}, we prove by an explicit analysis of the growth of the K\"ahler potential that when $\theo$ is a closed orbit, then $F|_{\theo}$ is indeed proper and bounded below.

\subsection{Action of Semisimple Elements of $\mathfrak{g}$}
We will say that a function $G(s)$ of a real variable $s\geq 0$ {\em grows exponentially in $s$} if there exist constants $c>0, C>0$, and $C_0$ such that 
$G(s)\geq Ce^{cs} + C_0$ for all $s\geq 0$.
We say $G(s)$ {\em grows quadratically in $s$} if there is a real polynomial $a_2s^2 + a_1s + a_0$ with $a_2>0$ such that $G(s)\geq a_2s^2 + a_1s + a_0$ for all $s\geq 0$.

  Suppose that $V$ is a finite-dimensional complex vector space with Hermitian inner product $\langle\cdot, \cdot\rangle$, and that $X\in\End(V)$ is a semisimple endomorphism that is skew-Hermitian (so $\langle Xv,w\rangle + \langle v,Xw\rangle = 0$ for all $v,w\in V$); then $X$ has imaginary eigenvalues.  
  Since $X$ is semisimple and skew-Hermitian, $iX$ is semisimple with real eigenvalues.   
  Decompose $V=\oplus_\eta  V_\eta$, a direct sum of eigenspaces for $X$ with eigenvalue $\eta$.  Furthermore,
the subspaces $V_\eta, V_{\eta'}$ are  orthogonal for distinct $\eta,\eta'$, since they are orthogonal for $X$.
   Then:
\begin{lemma}\label{splitting lemma}
For any $v = \sum v_\eta \in V$, with $v_\eta\in V_\eta$, we have, for $s\in\mathbb{R}$,
\bd
e^{isX}\cdot v = \sum_\eta e^{s\eta} v_\eta \; \text{and} \; |e^{isX}\cdot v|^2 = \sum_\eta |e^{s\eta}v_\eta|^2 
= \sum_\eta e^{2s\eta}|v_\eta|^2.
\ed
\end{lemma}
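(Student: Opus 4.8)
The plan is to reduce the claim to the spectral theorem applied to the Hermitian operator $iX$, together with the elementary observation that $e^{isX}$ scales each eigenspace by a \emph{positive real} number. First I would note that, since $X$ is semisimple and skew-Hermitian, the operator $iX$ is Hermitian; hence $V$ is the \emph{orthogonal} direct sum $V = \bigoplus_\eta V_\eta$ of the eigenspaces of $iX$, indexed by its (real) eigenvalues $\eta$. These are precisely the summands $V_\eta$ occurring in the statement — equivalently, $V_\eta$ is the eigenspace of $X$ for the imaginary eigenvalue $-i\eta$ — and their pairwise orthogonality is exactly the orthogonality recorded in the text just above the lemma.

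Next I would compute the action of the flow on a single summand. On $V_\eta$ the operator $X$ acts as the scalar $-i\eta$, so $e^{isX}$ acts as $e^{is(-i\eta)} = e^{s\eta}$, which for $s,\eta\in\mathbb{R}$ is a positive real number. By linearity of $e^{isX}$, writing $v = \sum_\eta v_\eta$ with $v_\eta\in V_\eta$ gives immediately $e^{isX}v = \sum_\eta e^{s\eta}v_\eta$, and this is simultaneously the eigenspace decomposition of the vector $e^{isX}v$. Applying the Pythagorean identity to the orthogonal decomposition yields $|e^{isX}v|^2 = \sum_\eta |e^{s\eta}v_\eta|^2$; since each $e^{s\eta}$ is a positive real scalar it pulls out of the norm squared as $e^{2s\eta}$, giving the final equality $|e^{isX}v|^2 = \sum_\eta e^{2s\eta}|v_\eta|^2$.

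There is no genuine obstacle here: the entire content is the spectral decomposition of $iX$ together with the fact that $e^{isX}$ acts by true positive-real scaling — rather than by a unitary phase — on each eigenspace, which is exactly what makes the norm identity so clean. The only point worth stating explicitly is the bookkeeping identification of the $V_\eta$ with the eigenspaces of $iX$ and the reality and positivity of $e^{s\eta}$; once these are in place both displayed formulas are immediate. This lemma then functions purely as a computational device for tracking the exponential growth of $F$ along the directions $\wt{iX}$ in the properness argument of Section~\ref{properness subsection}.
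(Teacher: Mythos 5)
Your proof is correct and is essentially the argument the paper intends: the lemma is stated there without a separate proof, as an immediate consequence of the spectral decomposition of the Hermitian operator $iX$ into orthogonal eigenspaces $V_\eta$, on each of which $e^{isX}$ acts by the positive real scalar $e^{s\eta}$, so the norm identity follows from the Pythagorean theorem. Your explicit bookkeeping (identifying $V_\eta$ as the $\eta$-eigenspace of $iX$, i.e.\ the $-i\eta$-eigenspace of $X$, and noting the reality and positivity of $e^{s\eta}$) correctly resolves the slight looseness in the paper's phrasing and adds nothing beyond what the paper takes as immediate.
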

Continuing with the above hypotheses, suppose next that $V$ comes equipped with a real structure, i.e., a complex antilinear involution $\sigma: V\rightarrow V$.  
The imaginary part of a vector $v$ is then $\on{Im}(v) = \frac{v+\sigma(v)}{2i}$.  
If $X$ is a real endomorphism, so 
$\sigma\circ X = X\circ\sigma$,
and if $v\in V_\eta$ is an eigenvector for $iX$ with eigenvalue $\eta$, then $iX\sigma(v) = \sigma(-iXv) = \sigma(-\eta v) = -\eta\sigma(v)$, so $\sigma(v)\in V_{-\eta}$. 

\subsection{Action of Certain Gauge Transformations}
 Fixing $X_\ell, X_r \in \mathfrak{u}(V)$ and taking $Y_\ell = iX_\ell$, $Y_r = iX_r$ and defining $\mathsf{Y}(t)$ as in Formula \eqref{Lie algebra function}, there is a finite subset $S = \{s_0,s_1,\dots,s_d\}\subset [0,1]$ so that on each interval $(s_{k-1}, s_k)$, the number of positive eigenvalues, and the multiplicity of each, of $\mathsf{Y}(t)$ is constant.   
  It follows from Proposition \ref{continuous diagonalization} that we can find an orthogonal eigenbasis for $\mathsf{Y}(t)$,
$v_i(t): \mathbb{R}\rightarrow V$, $i=1,\dots, n$, varying continuously in $t\in\mathbb{R}$, with continuously varying eigenvalues $\eta_i(t)$.
It follows that, fixing an interval $(s_{k-1}, s_k)$, there are index sets $J_+, J_-$ and continuous functions $\nu_j(t)$, $j\in J_+\sqcup J_-$ so that if $j\neq j'$ then $\nu_j(t)\neq \nu_{j'}(t)$ for all $t\in (s_{k-1}, s_k)$, and for each $i\in I_+$, respectively $i\in I_-$, there exists a unique $j\in J_+$, respectively a  unique $j\in J_-$, with $\eta_i(t) = \nu_j(t)$ on $[s_{k-1}, s_k]$.  We assume the index sets $J_+, J_-$ have been chosen so that $\eta_{-j}(t) = -\eta_j(t)$ for all $j$.

A continuous map $v(t):[s_{k-1},s_k]\rightarrow V$ then admits a unique expression
$v(t) = \sum_{j\in J_+} v_j(t) + v_0 + \sum_{j\in J_-} v_j(t)$, with $\mathsf{Y}(t)v_j(t) = \nu_j(t)v_j(t)$ for $j\in J_+\sqcup J_-$ and $\mathsf{Y}(t)v_0(t) = 0$.

\begin{lemma}\label{imaginary estimate}
For any continuous $v(t):[s_{k-1},s_k]\rightarrow V$, 
either
$\ds\int_{s_{k-1}}^{s_k}|\on{Im}(e^{s\mathsf{Y}(t)}\cdot v(t))|^2\, dt$  is bounded as a function of $s\geq 0$ or it grows exponentially in $s$.
\end{lemma}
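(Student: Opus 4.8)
The plan is to compute $G(s):=\int_{s_{k-1}}^{s_k}|\on{Im}(e^{s\mathsf{Y}(t)}v(t))|^2\,dt$ almost explicitly, by running the continuous fibrewise eigendecomposition of $v(t)$ through the real structure $\sigma$, and then to read the dichotomy off the resulting formula. First I would fix the subinterval $[s_{k-1},s_k]$ and write $v(t)=v_0(t)+\sum_{j\in J_+\sqcup J_-}v_j(t)$ as above, observing that each component is continuous in $t$ since the $\nu_j$ are pairwise distinct on the open interval (so the orthogonal projections onto the eigenspaces of $\mathsf{Y}(t)$ vary continuously, and at the endpoints one uses the orthogonal eigenbasis supplied by Proposition~\ref{continuous diagonalization}). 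I would then record the linear-algebra input needed: $\mathsf{Y}(t)$, being $i$ times a real skew-Hermitian operator, is Hermitian with real eigenvalues and mutually orthogonal eigenspaces; $\sigma$ is an isometry for $\langle\cdot,\cdot\rangle$; and $\sigma$ anticommutes with $\mathsf{Y}(t)$, hence carries the $\eta$-eigenspace to the $(-\eta)$-eigenspace and satisfies $\sigma e^{s\mathsf{Y}(t)}=e^{-s\mathsf{Y}(t)}\sigma$.

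Next I would apply $2i\,\on{Im}(w)=w+\sigma(w)$ to $w=e^{s\mathsf{Y}(t)}v(t)$. By Lemma~\ref{splitting lemma} and the last identity, the $\nu_j(t)$-eigencomponent of $w+\sigma(w)$ is $e^{s\nu_j(t)}v_j(t)+e^{-s\nu_j(t)}\sigma(v_{-j}(t))$ and its kernel component is $v_0(t)+\sigma(v_0(t))$. Using orthogonality of distinct eigenspaces, and the fact that inside a single eigenspace the cross term $\langle e^{s\nu_j(t)}v_j(t),e^{-s\nu_j(t)}\sigma(v_{-j}(t))\rangle=\langle v_j(t),\sigma(v_{-j}(t))\rangle$ is independent of $s$, pairing the $\pm\nu_j$ contributions gives
\[
4\,|\on{Im}(e^{s\mathsf{Y}(t)}v(t))|^2=2\sum_{j\in J_+}\big(e^{2s\nu_j(t)}|v_j(t)|^2+e^{-2s\nu_j(t)}|v_{-j}(t)|^2\big)+C(t),
\]
where $C(t)$ collects the kernel term $4|\on{Im}(v_0(t))|^2$ and the $\nu_j$-cross terms and is independent of $s$. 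Integrating over $[s_{k-1},s_k]$ yields $G(s)=\tfrac14 C_0+\tfrac12\sum_{j\in J_+}\big(I_j^+(s)+I_j^-(s)\big)$ with $C_0=\int_{s_{k-1}}^{s_k}C(t)\,dt$, $I_j^+(s)=\int_{s_{k-1}}^{s_k}e^{2s\nu_j(t)}|v_j(t)|^2\,dt$ and $I_j^-(s)=\int_{s_{k-1}}^{s_k}e^{-2s\nu_j(t)}|v_{-j}(t)|^2\,dt$.

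Finally I would conclude by inspection of this formula. For $j\in J_+$ one has $\nu_j(t)\geq 0$ on $[s_{k-1},s_k]$, so $e^{-2s\nu_j(t)}\leq 1$ for $s\geq 0$ and $0\leq I_j^-(s)\leq\int_{s_{k-1}}^{s_k}|v_{-j}(t)|^2\,dt$, a bounded function of $s$; meanwhile $I_j^+(s)\geq 0$, and $I_j^+\equiv 0$ precisely when $v_j\equiv 0$. If $v_j\equiv 0$ for every $j\in J_+$ then $G(s)=\tfrac14 C_0+\tfrac12\sum_{j\in J_+}I_j^-(s)$ is bounded. Otherwise choose $j_0\in J_+$ with $v_{j_0}\not\equiv 0$ and an interior point $t^\ast\in(s_{k-1},s_k)$ with $v_{j_0}(t^\ast)\neq 0$ (one exists since $v_{j_0}$ is continuous and not identically zero); continuity supplies a closed $[a,b]\subset(s_{k-1},s_k)$ on which $|v_{j_0}(t)|^2\geq\delta>0$ and $\nu_{j_0}(t)\geq\varepsilon>0$, so $G(s)\geq\tfrac14 C_0+\tfrac12 I_{j_0}^+(s)\geq\tfrac14 C_0+\tfrac12\delta(b-a)e^{2\varepsilon s}$, which is of the required exponential form. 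I expect the only real work to be the middle step: verifying that $\sigma$ is an isometry anticommuting with $\mathsf{Y}(t)$ and, crucially, that the eigenspace cross-terms are genuinely $s$-independent (the factors $e^{s\nu_j}$ and $e^{-s\nu_j}$ cancelling within each $\pm\nu_j$ pairing); once that identity is in hand, the dichotomy falls out immediately.
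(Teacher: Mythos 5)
Your argument is correct and takes essentially the same route as the paper's proof: both decompose $e^{s\mathsf{Y}(t)}v(t)$ into the continuous eigencomponents, use that $\sigma$ exchanges the $\pm\nu_j(t)$-eigenspaces (so cross terms are either absent by orthogonality or $s$-independent), and then obtain exponential growth from a compact subinterval of $(s_{k-1},s_k)$ on which some $J_+$-component is bounded away from zero while $\nu_j$ is bounded below by a positive constant. The only cosmetic difference is that you record the full exact expansion before taking the dichotomy, whereas the paper discards the uniformly bounded part first; and the isometry of $\sigma$ that you flag is harmless---it holds in the intended application and is not actually needed, since one may simply retain the terms $|\sigma(v_j(t))|^2$ as they stand.
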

\begin{proof}
We have
\begin{multline*}
\on{Im}(e^{s\mathsf{Y}(t)}v(t)) =
\on{Im}(v_0(t)) + \frac{1}{2i}\sum_{j\in J_+}\big[e^{s\eta_j(t)}v_j(t) + \sigma(e^{s\eta_j(t)}v_j(t)) + e^{-s\eta_j(t)}v_{-j}(t)+ \sigma\big(e^{-s\eta_j(t)}\sigma(v_{-j}(t))\big)\big] \\
=
 \on{Im}(v_0(t)) + \frac{1}{2i}\sum_{j\in J_+}\big[(e^{s\eta_j(t)}(v_j(t) + \sigma(v_j(t))) + e^{-s\eta_j(t)}(v_{-j}(t) + \sigma(v_{-j}(t)))\big].
\end{multline*}
Since $ \on{Im}(v_0(t))$ and $\sum_{j\in J_+} e^{-s\eta_j(t)}\big(v_{-j}(t) + \sigma(v_{-j}(t))\big)$ are continuous functions of $t$, uniformly bounded in norm for $s\geq 0$, we find that $\int_{s_{k-1}}^{s_k}|\on{Im}(e^{s\mathsf{Y}(t)}\cdot v)|^2\, dt$ is unbounded as a function of $s\geq 0$ if  and only if
\begin{equation}\label{exponential estimation}
\frac{1}{4}\int_{s_{k-1}}^{s_k}\big|\sum_{j\in J_+}(e^{s\eta_j(t)}(v_j(t) + \sigma(v_j(t))) \big|^2 \, dt 
= \frac{1}{4}
\sum_{j\in J_+}
\int_{s_{k-1}}^{s_k}e^{2s\eta_j(t)}\big[|v_j(t)|^2 + |\sigma(v_j(t)|^2\big]\, dt
\end{equation}
is unbounded as a function of $s$.  This happens if and only if $|v_j(t)|^2 + |\sigma(v_j(t)|^2>0$ for some $j$ and some $t\in [s_{k-1}, s_k]$: in that case, by continuity we can find some closed interval $[a,b]\subset (s_{k-1}, s_k)$ (with $b>a$) and $\epsilon >0$ for which $|v_j(t)|^2 + |\sigma(v_j(t)|^2>\epsilon$ on $[a,b]$; and then there exists a $C>0$ with $\eta_j(t)>C$ on $[a,b]$, showing that the right-hand side of \eqref{exponential estimation} is bounded below by $\frac{(b-a)\epsilon}{4} e^{2Cs}$, which grows exponentially in $s$.
\end{proof}

Note that, taking $g(t) = e^{s\mathsf{Y}(t)}$, we get $\frac{dg}{dt}g(t)\inv = s\frac{d\mathsf{Y}}{dt} = s(Y_r - Y_\ell)$.  
The modified K\"ahler potential $\mu_K + h + \overline{h}$ of \eqref{Kahler potential fmla}  thus satisfies
\bd
\ds(\mu_K+h+\overline{h})\big(\on{Ad}_{e^{s\mathsf{Y}(t)}}(\alpha,\beta)\big)  = 
 \int_0^1|\on{Im}(\on{Ad}_{e^{s\mathsf{Y}(t)}}\alpha(t)) +s\on{Im}(Y_r - Y_\ell)|^2\, dt + \frac{1}{2}\int_0^1|\on{Ad}_{e^{s\mathsf{Y}(t)}}\beta(t)|^2 \,dt.
\ed

\begin{prop}\label{estimate}
Let $(Y_\ell, Y_r)\in\mathfrak{g}\times\mathfrak{g}$ and let $\mathsf{Y}(t) = (1-t)Y_\ell + tY_r$.  If
$(\mu_K + h + \overline{h})(\on{Ad}_{e^{s\mathsf{Y}(t)}}(\alpha,\beta))$ is unbounded as a function of $s$ then $(\mu_K + h + \overline{h})(\on{Ad}_{e^{s\mathsf{Y}(t)}}(\alpha,\beta))$ grows at least quadratically in $s$.
\end{prop}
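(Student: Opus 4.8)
The plan is to extract everything from the displayed expression for $(\mu_K + h + \overline{h})(\on{Ad}_{e^{s\mathsf{Y}(t)}}(\alpha,\beta))$ immediately preceding the statement. Write $\Phi(s)$ for that expression and decompose it as $\Phi(s) = A(s) + B(s)$, where $A(s) = \int_0^1 |\on{Im}(\on{Ad}_{e^{s\mathsf{Y}(t)}}\alpha(t)) + s\,\on{Im}(Y_r - Y_\ell)|^2\,dt$ and $B(s) = \tfrac12\int_0^1 |\on{Ad}_{e^{s\mathsf{Y}(t)}}\beta(t)|^2\,dt$. Both summands are nonnegative, so $\Phi(s)\ge 0$; hence any lower bound for $\Phi$ of quadratic type, or of eventual exponential type, already implies the proposition. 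It therefore suffices to show that if $\Phi$ is unbounded then it is eventually exponential, or grows at least quadratically.

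First I would dispose of the $\beta$-term. Decomposing $\beta(t)$ into eigencomponents of $\on{ad}_{\mathsf{Y}(t)}$ and using Lemma \ref{splitting lemma} yields $B(s) = \tfrac12\int_0^1 \sum_\eta e^{2s\eta(t)}|\beta_\eta(t)|^2\,dt$ (for the $(Y_\ell,Y_r)$ relevant here, e.g.\ Hermitian ones, as arise via Mostow's Theorem \ref{mostow}, the hypotheses of Lemmas \ref{splitting lemma} and \ref{imaginary estimate} hold for both the defining and the adjoint representations). Running the dichotomy argument from the proof of Lemma \ref{imaginary estimate} — subdividing $[0,1]$ into the finitely many intervals on which the eigenvalue data of $\mathsf{Y}(t)$ is locally constant, and using the continuous diagonalization of Proposition \ref{continuous diagonalization}, now with no $\sigma$-pairing needed and hence a little simpler — shows that $B(s)$ is either bounded or grows exponentially in $s$. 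If $B$ is unbounded we are done, since $\Phi\ge B$; so we may assume $B$ is bounded, and then $\Phi$ is unbounded exactly when $A$ is.

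For $A$, put $P(s) = \int_0^1 |\on{Im}(\on{Ad}_{e^{s\mathsf{Y}(t)}}\alpha(t))|^2\,dt$ and $b = \on{Im}(Y_r - Y_\ell)$, so $A(s) = \big\|\on{Im}(\on{Ad}_{e^{s\mathsf{Y}(t)}}\alpha(t)) + sb\big\|_{L^2[0,1]}^2$ with $\|sb\|_{L^2} = s|b|$. Lemma \ref{imaginary estimate}, summed over the subintervals above, tells us $P$ is bounded or grows exponentially. If $P$ grows exponentially, the reverse triangle inequality gives $A(s)\ge (\sqrt{P(s)} - s|b|)^2\ge\tfrac14 P(s)$ once $s$ is large enough that $s|b|\le\tfrac12\sqrt{P(s)}$, so $\Phi$ is exponential. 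If instead $P(s)\le M$ for all $s$, then $A(s)\le 2P(s) + 2s^2|b|^2\le 2M + 2s^2|b|^2$, so unboundedness of $A$ forces $b\ne 0$, and then $A(s)\ge (s|b| - \sqrt{M})^2\ge\tfrac14 s^2|b|^2$ for $s$ large, so $\Phi$ grows at least quadratically. This exhausts the cases.

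The main obstacle I anticipate is the interaction, inside the $L^2$-norm defining $A$, between the conjugated imaginary part $\on{Im}(\on{Ad}_{e^{s\mathsf{Y}(t)}}\alpha(t))$ — which may itself blow up — and the linear drift term $sb$: a priori these could partially cancel and leave growth of some intermediate order such as $s^{3/2}$, which would defeat the proposition. What prevents this is precisely the rigidity supplied by Lemma \ref{imaginary estimate}: the conjugated term admits no intermediate growth rate at all — it is bounded or exponential — so either it dominates (forcing exponential growth of $A$) or it is uniformly bounded (and then the honest linear term $sb$ is unobstructed and controls the bottom of $A(s)$, yielding the clean $s^2$ lower bound). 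Apart from this, the remaining work is the bounded-or-exponential dichotomy for the $\beta$-term, which is a transcription of the proof of Lemma \ref{imaginary estimate}.
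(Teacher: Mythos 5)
Your proposal is correct and follows essentially the same route as the paper: the same case analysis on the $\beta$-term and on $P(s)=\int_0^1|\on{Im}(\on{Ad}_{e^{s\mathsf{Y}(t)}}\alpha(t))|^2\,dt$, using the bounded-or-exponential dichotomy from Lemma \ref{imaginary estimate} (and the eigen-decomposition of Proposition \ref{continuous diagonalization} with Lemma \ref{splitting lemma}) and then absorbing the linear drift $s\,\on{Im}(Y_r-Y_\ell)$. Your explicit reverse-triangle-inequality estimates simply spell out the comparisons the paper's proof leaves implicit.
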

\begin{proof}
Suppose first that the term $\ds\frac{1}{2}\int_0^1|\on{Ad}_{e^{s\mathsf{Y}(t)}}\beta(t)|^2 \,dt$ is unbounded as a function of $s$.  Combining Proposition \ref{continuous diagonalization} with Lemma \ref{splitting lemma}, we get
\bd
\frac{1}{2}\int_0^1|\on{Ad}_{e^{s\mathsf{Y}(t)}}\beta(t)|^2 \,dt  = 
\frac{1}{2}\sum_i \int_0^1 e^{2s\eta_i(t)}|\beta_i(t)|^2\, dt.
\ed  Either each $\beta_i(t)\leq 0$ whenever $\eta_i(t)>0$, in which case the above expression is bounded as a function of $s$; or there is some closed interval $[a,b]$ and $\epsilon>0, C>0$ on which $|\beta_i(t)|>\epsilon$ and $\eta_i(t)>0$, showing that the above expression is bounded below by $\frac{(b-a)\epsilon}{2}e^{2Cs}$ and thus grows exponentially in $s$.   We conclude that $\ds(\mu_K+h+\overline{h})\big(\on{Ad}_{e^{s\mathsf{Y}(t)}}(\alpha,\beta)\big)$ grows exponentially as a function of $s$.  

Suppose next that $\ds\int_0^1|\on{Im}(\on{Ad}_{e^{s\mathsf{Y}(t)}}\alpha(t))|^2 \, dt$ is unbounded as a function of $s$.  
Applying Lemma \ref{imaginary estimate} to $\ds\int_0^1|\on{Im}(\on{Ad}_{e^{s\mathsf{Y}(t)}}\alpha(t))|^2 \, dt$, we see that either it is bounded in $s$ or grows exponentially in $s$.  Since $|s\on{Im}(Y_r-Y_\ell)|^2$ is bounded above by a quadratic polynomial in $s$, if $\ds\int_0^1|\on{Im}(\on{Ad}_{e^{s\mathsf{Y}(t)}}\alpha(t))|^2 \, dt$ grows exponentially in $s$ then $\ds\int_0^1|\on{Im}(\on{Ad}_{e^{s\mathsf{Y}(t)}}\alpha(t)) +s\on{Im}(Y_\ell - Y_r)|^2\, dt$ grows exponentially in $s$, implying that $\ds(\mu_K+h+\overline{h})\big(\on{Ad}_{e^{s\mathsf{Y}(t)}}(\alpha,\beta)\big)$ grows exponentially in $s$.

Finally, suppose that $\ds\int_0^1|\on{Im}(\on{Ad}_{e^{s\mathsf{Y}(t)}}\alpha(t))|^2 \, dt$ is bounded as a function of $s$.
Then either $\on{Im}(Y_r-Y_\ell) = 0$, in which case the entire integral $\ds\int_0^1|\on{Im}(\on{Ad}_{e^{s\mathsf{Y}(t)}}\alpha(t)) +s\on{Im}(Y_r - Y_\ell)|^2\, dt$ is bounded as a function of $s$, or $\on{Im}(Y_r-Y_\ell) \neq 0$, in which case
the entire integral $\ds\int_0^1|\on{Im}(\on{Ad}_{e^{s\mathsf{Y}(t)}}\alpha(t)) +s\on{Im}(Y_r - Y_\ell)|^2\, dt$ grows quadratically in $s$.  

Combining the conclusions of the previous three paragraphs yields the conclusion.
\end{proof}

\begin{remark}\label{uniformity remark}
The proofs of Lemma \ref{imaginary estimate} and Proposition \ref{estimate} show the following. Let $X(z)$ be an element of $\mathfrak{u}(V)$ depending continuously on $z\in B$ where $B$ is a small ball around $0\in\mathbb{R}^N$, and associate the function $\mathsf{Y}(t,z)$ via Formula \eqref{Lie algebra function}.  
If 
$(\mu_K + h + \overline{h})(\on{Ad}_{e^{s\mathsf{Y}(t,0)}}(\alpha,\beta))$
is unbounded as a function of $s$, then there are a small ball $B$ around $0\in\mathbb{R}^N$ and 
a choice of $a_2s^2 + a_1s + a_0$ with $a_2>0$ such that 
\bd
(\mu_K + h + \overline{h})(\on{Ad}_{e^{s\mathsf{Y}(t,z)}}(\alpha,\beta)) \geq 
a_2s^2 + a_1s + a_0
\ed
for all $s\geq 0$ and $z\in B$.
\end{remark}

\subsection{Properness of $F|_\theo$}\label{properness subsection}
We now are ready to prove:
\begin{prop}\label{bbp}
For each closed orbit $\theo = G\cdot (x,y,z)\subset T^*\mathbb{G}\times T^*V\times\mathbb{C}$ with $z\neq 0$, the function $F|_{\theo}$ is proper and bounded below.
\end{prop}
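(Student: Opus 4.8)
The plan is to reduce, via Mostow's Theorem~\ref{mostow}, to a growth estimate for $F$ along ``geodesic rays'' in the orbit, and then to feed in the explicit bounds of Proposition~\ref{estimate} and Lemma~\ref{splitting lemma} together with properness of $\mu_K$ (Proposition~\ref{Mayrand Kahler potential}) and closedness of $\theo$. First, since $z\neq 0$ the potential $F$ is finite on $\theo$. Fix, as in \eqref{coords on O}, a maximal compact subgroup $H\subseteq G_{(x,y,z)}$ with $H\subseteq K$ and Mostow's diffeomorphism $K\times_H\mathfrak{m}\xrightarrow{\ \cong\ }\theo$, $(k,v)\mapsto ke^{iv}\cdot(x,y,z)$. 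Since $F$ is $K$-invariant, $F|_{\theo}$ is, transported via \eqref{coords on O}, the function $(k,v)\mapsto\phi(v)$ on $K\times_H\mathfrak{m}$, where $\phi\colon\mathfrak{m}\to\mathbb{R}$, $\phi(v)=F\big(e^{iv}\cdot(x,y,z)\big)$, is continuous and $H$-invariant; as $K$ is compact, $F|_{\theo}$ is proper and bounded below iff $\phi$ is. Because $\mathfrak{m}$ is finite-dimensional and $\phi$ is continuous, it therefore suffices to prove that $\phi(sX)\to+\infty$ as $s\to+\infty$, uniformly for $X$ in the unit sphere of $\mathfrak{m}$; properness of $\phi$ follows, and then boundedness below is automatic.

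Next I would decompose $\phi$ along a ray. For a unit vector $X\in\mathfrak{m}\subseteq\mathfrak{k}$, write $iX=(Y_\ell,Y_r)$ and $\mathsf{Y}(t)=(1-t)Y_\ell+tY_r$ as in \eqref{Lie algebra function}; by Section~\ref{sec:hk structure} the left--right action of $e^{isX}$ on $T^*\mathbb{G}$ carries the coordinate $(\alpha,\beta)$ of $(x,y,z)$ to $\operatorname{Ad}_{e^{s\mathsf{Y}(t)}}(\alpha,\beta)$, i.e.\ to the point $e^{isX}\cdot x$. Writing $y$ for the $T^*V$-coordinate and using $\chi(e^{isX})=e^{\lambda(iX)s}$ with $\lambda(iX)\in\mathbb{R}$, we obtain
\[
\phi(sX)=A(s)+B(s)+\tfrac12\log|z|^2+\lambda(iX)\,s,\qquad A(s):=(\mu_K+h+\overline h)\big(\operatorname{Ad}_{e^{s\mathsf{Y}(t)}}(\alpha,\beta)\big),\quad B(s):=|e^{isX}\cdot y|^2.
\]
By \eqref{potential comparison} and Proposition~\ref{Mayrand Kahler potential}, $A(s)\ge\tfrac12\mu_K(e^{isX}\cdot x)\ge 0$, and by Proposition~\ref{estimate} $A$ is either bounded in $s$ or grows at least quadratically; by Lemma~\ref{splitting lemma}, $B(s)=\sum_{\eta}e^{2s\eta}|y_\eta|^2\ge 0$ is either bounded in $s$ or grows exponentially; and the remaining term is affine in $s$.

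The heart of the matter is controlling the possibly-negative linear term $\lambda(iX)s$, and here closedness of $\theo$ enters. I would argue a dichotomy. If $A$ and $B$ are not both bounded in $s$, then $A(s)+B(s)$ grows at least quadratically and hence dominates $\lambda(iX)s$, so $\phi(sX)\to+\infty$. If instead $A$ and $B$ are both bounded in $s$, then $\mu_K(e^{isX}\cdot x)\le 2A(s)$ is bounded, so by properness of $\mu_K$ the set $\{e^{isX}\cdot x:s\ge 0\}$ is precompact in $T^*\mathbb{G}$, and $\{e^{isX}\cdot y:s\ge 0\}$ is bounded, hence precompact, in $T^*V$. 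I claim this forces $\lambda(iX)>0$. Indeed, if $\lambda(iX)\le 0$ then $|\chi(e^{isX})z|=e^{\lambda(iX)s}|z|$ stays bounded, so $\{e^{isX}\cdot(x,y,z):s\ge 0\}$ is a bounded subset of $T^*\mathbb{G}\times T^*V\times\mathbb{C}$ contained in the closed orbit $\theo$, hence precompact in $\theo$; but under \eqref{coords on O} the point $e^{isX}\cdot(x,y,z)$ corresponds to $[\,e,\,sX\,]\in K\times_H\mathfrak{m}$, and since $\|sX\|=s\to+\infty$ while \eqref{coords on O} is a homeomorphism, that set cannot be precompact --- a contradiction. (When $\lambda(iX)<0$ one may instead note directly that a subsequential limit of $e^{is_nX}\cdot(x,y,z)$ would lie in $\overline{\theo}=\theo$ yet have vanishing $\mathbb{C}$-coordinate, which is impossible since every point of $\theo$ has $\mathbb{C}$-coordinate $\chi(g)z\neq 0$.) Hence $\lambda(iX)>0$ and $\phi(sX)\ge\tfrac12\log|z|^2+\lambda(iX)s\to+\infty$. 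In all cases $\phi(sX)\to+\infty$.

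Finally I would upgrade this to uniformity over the (compact) unit sphere of $\mathfrak{m}$. Remark~\ref{uniformity remark} provides, whenever $A$ is unbounded along a direction $X_0$, a uniform lower bound $A(s)\ge a_2s^2+a_1s+a_0$ with $a_2>0$ valid for all $X$ in a neighbourhood of $X_0$; an analogous uniform lower bound $B(s)\ge\varepsilon\, e^{2\tau s}$ near $X_0$ follows from a routine spectral-perturbation argument (choose a threshold $\tau>0$ not in the spectrum of the self-adjoint operator $iX_0$ on $V$; the spectral projection of $iX$ onto $[\tau,\infty)$ depends continuously on $X$ near $X_0$, and a nonzero component of $y$ in its range at $X_0$ persists on a neighbourhood). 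Since $X\mapsto\lambda(iX)$ is continuous and the closedness argument of the third paragraph shows that whenever $\lambda(iX_0)\le 0$ at least one of $A,B$ is unbounded along $X_0$, on a neighbourhood of each unit $X_0$ we obtain a lower bound for $\phi(sX)$ either by a fixed polynomial in $s$ with positive leading coefficient (coming from $A+B$, with the linear term $\lambda(iX)s$ absorbed since $\lambda(iX)$ is bounded there) or by a fixed affine function with positive slope (when $\lambda(iX_0)>0$); a finite subcover then yields properness of $\phi$, hence of $F|_{\theo}$, and boundedness below as above. The main obstacle is precisely the compactness/closedness step of the third paragraph --- extracting from closedness of $\theo$ enough to defeat the linear term $\lambda(iX)s$, which is the ``convergence of downward Morse flows'' input alluded to in the introduction; the uniformity bookkeeping in the last paragraph (in particular the $B$-analogue of Remark~\ref{uniformity remark}) is routine but needs care.
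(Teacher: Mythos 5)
Your proposal is correct and takes essentially the same route as the paper's proof: Mostow coordinates reduce the problem to rays $s\mapsto e^{isX}\cdot(x,y,z)$, closedness of $\theo$ is used exactly as in the paper to rule out boundedness of $F_1$ along the ray when $\lambda(iX)\le 0$, Proposition \ref{estimate} and Lemma \ref{splitting lemma} then give quadratic/exponential growth dominating the linear term $\lambda(iX)s$, and compactness of the unit sphere in $\mathfrak{m}$ together with Remark \ref{uniformity remark} yields properness. Your explicit spectral-projection argument giving a uniform exponential lower bound for $|e^{isX}y|^2$ near a fixed direction just fills in a point the paper leaves implicit in its appeal to Remark \ref{uniformity remark}, and is a correct way to do so.
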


We will make extensive use of Mostow's coordinates \eqref{coords on O} on $\theo$.  Since $F$ is $K$-invariant, it suffices to prove that the composite $F\circ \phi$, with $\phi: \mathfrak{m}\rightarrow\theo$ defined by $\phi(X)= e^{iX}\cdot(x,y,z)$, is proper and bounded below.  
\begin{assumption}
We assume without loss of generality that $z=1$.
\end{assumption}
Note that $\log|\chi(e^{isX})|^2 = 2\lambda(iX)s$ with $\lambda(iX)\in\mathbb{R}$.  
Then:
\begin{prop}\label{growth estimates prop}
For each $X\in\mathfrak{m}$, the function $G(s)= F\big(e^{isX}\cdot (x,y,z)\big)$ satisfies one of:
\begin{enumerate}
\item $\lambda(iX)> 0$ and $F\big(e^{isX}\cdot (x,y,z)\big)\geq F_1\big(e^{isX}\cdot (x,y)\big)$  for all $s\geq 0$.
\item $\lambda(iX)\leq 0$ and $F\big(e^{isX}\cdot (x,y,z)\big)$ grows at least quadratically in $s$.
\end{enumerate}
\end{prop}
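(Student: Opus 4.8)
The plan is to split on the sign of $\lambda(iX)$ exactly as in the statement, after first peeling off the $\mathbb{C}$-factor. Recall that $G$ acts on $\mathbb{C}$ through $\chi$, that $z=1$ by the standing assumption, and that $\log|\chi(e^{isX})|^2 = 2\lambda(iX)s$ with $\lambda(iX)\in\mathbb{R}$; hence
\[
F\big(e^{isX}\cdot(x,y,1)\big) \;=\; F_1\big(e^{isX}\cdot(x,y)\big) \;+\; \lambda(iX)\,s .
\]
When $\lambda(iX)>0$ the term $\lambda(iX)s$ is $\ge 0$ for all $s\ge 0$, which is precisely assertion (1). So the content lies entirely in the case $\lambda(iX)\le 0$, where I would also assume $X\ne0$ (the only case that arises in the application of the proposition).

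The crux is the claim that, for $\lambda(iX)\le0$ and $X\ne 0$, the function $s\mapsto F_1(e^{isX}\cdot(x,y))$ is unbounded on $s\ge0$; this is the step where closedness of $\theo$ is used. Suppose it were bounded. Since $F_1$ is proper (Corollary \ref{proper-bdd-below}), the ray $\{e^{isX}\cdot(x,y):s\ge0\}$ then lies in a compact subset of $T^*\mathbb{G}\times T^*V$; and since $\lambda(iX)\le0$ the $\mathbb{C}$-coordinates $\{e^{\lambda(iX)s}:s\ge0\}\subseteq(0,1]$ lie in the compact set $[0,1]$. Hence the whole ray $\{e^{isX}\cdot(x,y,1):s\ge0\}$ is contained in a compact set $\mathcal{C}$. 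As $\theo$ is closed by the standing hypothesis of Proposition \ref{bbp}, $\mathcal{C}\cap\theo$ is compact. But in Mostow's coordinates \eqref{coords on O} this ray is the image of $\{sX:s\ge0\}\subset\mathfrak{m}$ under the map $v\mapsto e^{iv}\cdot(x,y,1)$, which is the restriction of the diffeomorphism $K\times_H\mathfrak{m}\xrightarrow{\cong}\theo$ to the slice $\mathfrak{m}$ and is in particular proper; so $\{sX:s\ge0\}$ would be relatively compact in $\mathfrak{m}$, contradicting $X\ne0$. This proves the claim.

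Assertion (2) then follows by combining the two growth dichotomies already available. Write $F_1(e^{isX}\cdot(x,y)) = (\mu_K+h+\overline{h})(e^{isX}\cdot x) + |e^{isX}\cdot y|^2$. By Proposition \ref{estimate} (applied factorwise over $\mathbb{G}=\prod_iG_i$ and summed), the first term is bounded in $s$ or grows at least quadratically; since $X$ acts on $T^*V$ by a semisimple skew-Hermitian endomorphism, Lemma \ref{splitting lemma} gives $|e^{isX}\cdot y|^2 = \sum_\eta e^{2s\eta}|y_\eta|^2$, which is bounded or grows exponentially. By the claim the sum is unbounded, so at least one summand is unbounded; using that $(\mu_K+h+\overline{h})\ge\tfrac12\mu_K$ is bounded below (Proposition \ref{Mayrand Kahler potential}) and $|e^{isX}\cdot y|^2\ge0$, it follows that $F_1(e^{isX}\cdot(x,y))$ grows at least quadratically in $s$; adding the linear term $\lambda(iX)s$ only changes the linear coefficient, giving assertion (2).

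The main obstacle is the claim in the middle paragraph: ruling out bounded behavior of $F_1$ along the ray when $\lambda(iX)\le0$. Everything else is bookkeeping with Proposition \ref{estimate} and Lemma \ref{splitting lemma}, but this step genuinely uses that $\theo$ is closed — equivalently, that $(x,y)$ is $\chi$-semistable — which is exactly what one should expect, since for non-semistable orbits $F|_{\theo}$ ought not to be proper or bounded below at all.
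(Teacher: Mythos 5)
Your proof is correct, and its overall skeleton coincides with the paper's: peel off the linear term $\lambda(iX)s$, reduce case (2) to showing $F_1(e^{isX}\cdot(x,y))$ is unbounded using closedness of $\theo$ together with properness of $F_1$ (Corollary \ref{proper-bdd-below}), and then get quadratic growth from Proposition \ref{estimate} on the $T^*\mathbb{G}$-factor and Lemma \ref{splitting lemma} on the $T^*V$-factor, exactly as the paper does. The one place where you genuinely diverge is the unboundedness step: the paper extracts a convergent subsequence $e^{is_nX}\cdot(x,y)\to(x_0,y_0)$ and then argues separately for $\lambda(iX)<0$ (the limit $(x_0,y_0,0)$ has vanishing $\mathbb{C}$-coordinate, so cannot lie in the closed orbit) and for $\lambda(iX)=0$ (the limit would force $(1,s_nX)\to(k,X')$ in $K\times_H\mathfrak{m}$ by Theorem \ref{mostow}, which is absurd), whereas you trap the entire ray $\{e^{isX}\cdot(x,y,1)\}$ in a compact subset of the ambient space (using $\lambda(iX)\le 0$ to bound the $\mathbb{C}$-coordinate), intersect with the closed orbit, and use properness of the Mostow slice parametrization $v\mapsto e^{iv}\cdot(x,y,1)$ to contradict unboundedness of $\{sX:s\ge 0\}$. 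This unifies the paper's two subcases into a single argument using the same ingredients, and it has the side benefit of flagging the degenerate point $X=0$ explicitly (for which assertion (2) as literally stated fails, but which never arises since the application in Proposition \ref{bbp} only uses $X$ in the unit sphere of $\mathfrak{m}$); the paper's subsequence argument, by contrast, needs no compactness of the ray in the $\mathbb{C}$-direction but pays for it with the case split. The remaining bookkeeping (one unbounded summand bounded below by the other forces at least quadratic growth, and adding $\lambda(iX)s$ only shifts the linear coefficient) matches the paper's conclusion verbatim.
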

\begin{proof}
We consider the cases separately:

\vspace{.6em}

\noindent
{\em Case 1.}  $\lambda(iX)\geq 0$.  Then $G(s) =  F_1(e^{isX}\cdot(x,y)) + \frac{1}{2}\log|\chi(e^{isX})|^2\geq F_1(e^{isX}\cdot(x,y))$.  

\vspace{.6em}

\noindent
{\em Case 2.} $\lambda(iX)\leq 0$.  If $F_1(e^{isX}\cdot (x,y))$ is bounded as a function of $s$, then by Corollary \ref{proper-bdd-below} the trajectory $\{e^{isX}\cdot (x,y)\}$ lies in a compact subset of $T^*\mathbb{G}\times T^*V$.  Thus, there exists an unbounded sequence $s_n$ in $\mathbb{R}_{\geq 0}$ for which the sequence $e^{is_nX}\cdot (x,y)$ converges in $T^*\mathbb{G}\times T^*V$, say to $(x_0,y_0)$.  

If $\lambda(iX)<0$,
it follows from the previous paragraph that $\ds\lim_{n\rightarrow\infty} e^{is_nX}\cdot (x,y,1) = (x_0,y_0,0)$ lies in the closure of $\theo$ in $T^*\mathbb{G}\times T^*V\times\mathbb{C}$; since its third coordinate is $0$ it cannot lie in $\theo$, a contradiction since $\theo$ was assumed closed.  Thus $F_1(e^{isX}\cdot (x,y))$ is unbounded as a function of $s$.  On the other hand, if
 $\lambda(iX) = 0$, then $\ds\lim_{n\rightarrow\infty} e^{is_nX}\cdot (x,y,1) = (x_0,y_0,1)$; since the orbit $\theo$ is assumed closed, we have $(x_0,y_0,1) = ke^{iX'}\cdot(x,y,1)$ for some $k\in K$ and $X'\in\mathfrak{m}$.  But Theorem \ref{mostow} then implies that $(1,s_nX)\rightarrow (k,X')$ in $K\times_L\mathfrak{m}$ as $n\rightarrow \infty$, which is obviously false.  Thus again $F_1(e^{isX}\cdot (x,y))$ is unbounded as a function of $s$.

Now choose a solution $(\alpha,\beta)$ of the complex Nahm equation representing $x\in T^*\mathbb{G}$.  By the conclusion of the previous paragraph, either $(\mu_K + h + \overline{h})(\on{Ad}_{e^{isX}}(\alpha,\beta))$ is unbounded as a function of $s$, or $|e^{isX}y|^2$ is unbounded as a function of $s$.  In the first case, identifying $iX = (Y_{\ell}, Y_{r})$ with $\mathsf{Y}(t)$ in the gauge group via Formula \eqref{Lie algebra function}, we conclude from Proposition \ref{estimate} that $(\mu_K + h + \overline{h})(\on{Ad}_{e^{isX}}(\alpha,\beta))$ grows at least quadratically as a function of $s$; while in the second case, we conclude 
from Lemma \ref{splitting lemma} that  $|e^{isX}y|^2$ grows exponentially as a function of $s$.  In either case, adding the linear function $\lambda(iX)s$ we still obtain that $F(e^{isX}\cdot(x,y,z))$ grows at least quadratically as a function of $s$.
\end{proof}

\begin{proof}[Proof of Proposition \ref{bbp}]
Let $S\subset\mathfrak{m}$ denote the unit sphere in $\mathfrak{m}$ in the $K$-invariant inner product induced from $\mathfrak{k}$.  We then get a proper surjective map $\mathbb{R}_{\geq 0}\times S \rightarrow \mathfrak{m}$, $(t,X)\mapsto tX$; it suffices to show that the composite map 
$\mathbb{R}_{\geq 0}\times S \rightarrow \mathbb{R}$, $(s,X) \mapsto F(e^{isX}\cdot (x,y,z))$ is proper and bounded below.

Consider the function $e:S\rightarrow \mathbb{R}$ defined by 
$e(X) = \lambda(iX)$; write 
\bd
S_+ = e\inv(\mathbb{R}_{>0}), \; S_0 = e\inv(0), \; S_- = e\inv(\mathbb{R}_{<0}).
\ed
By Proposition \ref{growth estimates prop} and Remark \ref{uniformity remark}, for every point $X\in S_-\sqcup S_0$ there are an open neighborhood $U_X$ of $X$ in $S_-\sqcup S_0$ and $a_2s^2 + a_1s + a_0$ with $a_2>0$ such that 
$F(e^{isX'}\cdot(x,y,z))\geq a_2s^2 + a_1s + a_0$ for all $X'\in U_X$.  Since $S_-\sqcup S_0$ is compact, it follows that there exists a single choice of $a_2s^2 + a_1s + a_0$ such that
$F(e^{isX'}\cdot(x,y,z))\geq a_2s^2 + a_1s + a_0$ for all $X'\in S_-\sqcup S_0$.  Thus the restriction of 
$(s,X)\mapsto F(e^{isX}\cdot (x,y,z))$
to $\mathbb{R}_{\geq 0}\times (S_-\sqcup S_0)$ is proper and bounded below.

Now, Proposition \ref{growth estimates prop} and Remark \ref{uniformity remark} together imply that the restriction of 
$(s,X)\mapsto F(e^{isX}\cdot (x,y,z))$
to a neighborhood of $\mathbb{R}_{\geq 0}\times S_0$ in $\mathbb{R}_{\geq 0}\times (S_+\sqcup S_0)$ grows at least quadratically in $s$; moreover, the proposition immediately implies that $(s,X)\mapsto F(e^{isX}\cdot (x,y,z))$ grows at least linearly in $s$, with a lower bound on the slope, on the complement of that neighborhood in $\mathbb{R}_{\geq 0}\times (S_+\sqcup S_0)$.  It follows that 
the restriction of 
$(s,X)\mapsto F(e^{isX}\cdot (x,y,z))$
to $\mathbb{R}_{\geq 0}\times (S_+\sqcup S_0)$
is proper and bounded below.

Combining the conclusions of the previous two paragraphs yields Proposition \ref{bbp}.\end{proof}

\begin{proof}[Proof of Theorem \ref{kempf-ness thm}]
The hypothesis of Proposition \ref{quotient diffeo} is supplied by Proposition \ref{bbp}.  Then Proposition \ref{quotient diffeo} immediately yields the conclusion.
\end{proof}

\section{Hilbert Schemes and Subvarieties}\label{main proof}
We now turn to the situation of Theorem \ref{main thm} of the introduction.  Thus, we return to the notation of Section \ref{sec:special case}.  

Consider $\mathsf{M} = \mathsf{M}_n = T^*SL_n\times T^*\mathbb{C}^n$.  
Applying Theorem \ref{kempf-ness thm} to the $GL_n = SL_n\times_{\mu_n} \Gm$-action induced from the adjoint action on $T^*SL_n$ and the obvious action on $\mathbb{C}^n$ shows that, for $\chi = \det: GL_n\rightarrow \Gm$ and $\xi = -i\det$, we have 
$\mu_{\mathbb{C}}\inv(0)/\!\!/_{\chi} G \cong \mathsf{M}/\!\!/\!\!/_{(0,\xi)} K$.  It follows that the hyperk\"ahler Kirwan map is identified with the map
\bd
H^*_{GL_n}(SL_n) \cong H^*_{GL_n}(T^*SL_n\times T^*\mathbb{C}^n) \xrightarrow{\kappa} H^*_{GL_n}\big(\mu\inv(0)^{\on{det-ss}}\big) = H^*(\mu\inv(0)/\!\!/_{\on{det}} GL_n).
\ed

As in Section \ref{sec:special case} above, the image of the natural embedding $T^*SL_n\times T^*\mathbb{C}^n\hookrightarrow T^*GL_n\times T^*\mathbb{C}^n$ is the preimage of $(1,0)$ under the natural map
\bd
T^*GL_n\times T^*\mathbb{C}^n \cong GL_n\times \mathfrak{gl}_n \times \mathbb{C}^n\times (\mathbb{C}^n)^* \ni (X,Y,i,j) \mapsto (\det(X),\on{tr}(Y))\in \mathbb{C}^*\times\mathbb{C}.
\ed
We write $\det\times\on{tr}$ for the map.  The map is clearly $GL_n$-invariant and thus descends to a map $\overline{\mu}_{\mathbb{C}}\inv(0)/\!\!/_{\on{det}} GL_n$.
\begin{lemma}\label{Hilb scheme}
The Hamiltonian reduction $\overline{\mu}_{\mathbb{C}}\inv(0)/\!\!/_{\on{det}} GL_n$ of $T^*GL_n\times T^*\mathbb{C}^n$ is isomorphic to the Hilbert scheme of points $(\mathbb{C}^*\times \mathbb{C})^{[n]}$.  Under this isomorphism, the function $\det\times\on{tr}$ is identified with the product, respectively sum, of the coordinates of the $n$ points.
\end{lemma}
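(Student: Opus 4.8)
The plan is to recognize $\overline{\mu}_{\mathbb{C}}\inv(0)/\!\!/_{\on{det}}GL_n$ as an ADHM-type quiver variety and to compare it with the standard description of the Hilbert scheme of points on the affine plane. Writing a point of $T^*GL_n\times T^*\mathbb{C}^n = GL_n\times\mathfrak{gl}_n\times\mathbb{C}^n\times(\mathbb{C}^n)^*$ as $(X,Y,i,j)$ with $i$ a column and $j$ a row (so that $ij$ is a matrix of rank at most one), I would first right-multiply the equation $\overline{\mu}_{\mathbb{C}}(X,Y,i,j)=XYX\inv-Y+ij=0$ by the invertible matrix $X$ to rewrite it as $[X,Y]+i(jX)=0$. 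Since $X\in GL_n$, the map $(X,Y,i,j)\mapsto(X,Y,i,jX)$ is a $GL_n$-equivariant automorphism of $GL_n\times\mathfrak{gl}_n\times\mathbb{C}^n\times(\mathbb{C}^n)^*$ (equivariance because $j\mapsto jg\inv$ and $X\mapsto gXg\inv$ force $jX\mapsto jXg\inv$), it fixes $\det(X)$ and $\on{tr}(Y)$, and it carries $\overline{\mu}_{\mathbb{C}}\inv(0)$ onto the intersection of the ADHM variety $\mathcal{N}=\{(B_1,B_2,i,j):[B_1,B_2]+ij=0\}$ with the open set $\{B_1\in GL_n\}$.

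Next I would invoke the classical ADHM picture. King's analysis of GIT quotients of quiver representations shows that on $\mathcal{N}$ the $\det$-semistable locus equals the $\det$-stable locus and equals the locus where $\im(i)$ generates $\mathbb{C}^n$ under $B_1,B_2$, and that $GL_n$ acts freely there; Nakajima's theorem then identifies $\mathcal{N}^{\on{det-ss}}/GL_n$ with $(\mathbb{C}^2)^{[n]}$ as schemes, the isomorphism sending $(B_1,B_2,i,j)$ (for which necessarily $j=0$, so that $B_1,B_2$ commute) to the colength-$n$ ideal $\{\,p(x_1,x_2)\in\mathbb{C}[x_1,x_2]:p(B_1,B_2)\,i(1)=0\,\}$. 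It then remains to match the open set $\{X\in GL_n\}$: for $Z\in(\mathbb{C}^2)^{[n]}$ the operator $B_1$ is multiplication by $x_1$ on the finite-dimensional algebra $\mathcal{O}_Z$, which is invertible exactly when $x_1$ lies in no maximal ideal of $\mathcal{O}_Z$, i.e.\ exactly when $\supp(Z)\subset\Cs\times\mathbb{C}$; since $\{B_1\in GL_n\}$ is open and, for any open subvariety $U$ of a smooth surface $Y$, the canonical inclusion $U^{[n]}\hookrightarrow Y^{[n]}$ is an open immersion onto the locus of subschemes with support in $U$, combining these identifications gives the desired isomorphism $\overline{\mu}_{\mathbb{C}}\inv(0)/\!\!/_{\on{det}}GL_n\cong(\Cs\times\mathbb{C})^{[n]}$. (Alternatively one can argue directly, without passing through $\mathbb{C}^2$, by describing the quotient as the moduli of $n$-dimensional cyclic $\mathbb{C}[x^{\pm1},y]$-modules equipped with a marked generator.)

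For the statement about functions: under this isomorphism $\det(X)$ and $\on{tr}(Y)$ become the determinant and trace of multiplication by the two coordinate functions on $\mathcal{O}_Z$. On the dense open subset parametrizing $n$ distinct points $(a_1,b_1),\dots,(a_n,b_n)$ these equal $\prod_k a_k$ and $\sum_k b_k$; since all of these are regular functions, the equality persists on the whole Hilbert scheme, which is the asserted identification of $\det\times\on{tr}$ with the product, respectively sum, of the coordinates of the $n$ points.

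The main obstacle is the GIT stability analysis on $\mathcal{N}$ --- that $\det$-semistability coincides with $\det$-stability and with cyclicity of $i$, and that $j$ consequently vanishes --- together with the fact that the resulting bijection is an isomorphism of schemes rather than merely of sets. This is the heart of the ADHM theory and is entirely standard once one is in the $(\mathbb{C}^2)^{[n]}$ setting; the only genuinely new bookkeeping here is the translation of the constraint $X\in GL_n$ into ``support disjoint from the line $\{x=0\}$'', which is exactly what turns $(\mathbb{C}^2)^{[n]}$ into $(\Cs\times\mathbb{C})^{[n]}$.
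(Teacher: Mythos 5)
Your proposal is correct and follows essentially the same route as the paper: the substitution $(X,Y,i,j)\mapsto(X,Y,i,jX)$ turning $\overline{\mu}_{\mathbb{C}}\inv(0)$ into the locus $\{[B_1,B_2]+ij=0,\ B_1\in GL_n\}$ of the ADHM variety, matching of $\det$-stability, and an appeal to Nakajima's identification of the GIT quotient with the Hilbert scheme. The paper's proof is just a compressed version of this; you additionally spell out the stability analysis, the open-immersion argument identifying $\{B_1\in GL_n\}$ with subschemes supported in $\mathbb{C}^*\times\mathbb{C}$, and the density argument for the $\det\times\on{tr}$ identification, all of which the paper leaves implicit.
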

\begin{proof}
The subset $\overline{\mu}_{\mathbb{C}}\inv(0)$ consists of $(X,Y,i,j)$ with $X$ invertible and 
$XYX\inv - Y + ij = 0$ is identified with the set of $(X,Y,i,j)\in T^*(\mathfrak{gl}_n\times\mathbb{C}^n)$ satisfying $[X,Y] + ij'=0$ and with $X$ invertible via $(X,Y,i,j)\mapsto (X,Y,i,j') = (X,Y,i, jX)$.  One easily sees that $\on{det}$-stability corresponds.  The result is thus immediate from \cite{Nakajima}.
\end{proof}

We obtain a commutative diagram
\begin{equation}\label{diagram}
\xymatrix{
H^*_{GL_n}(GL_n, \mathbb{Q})\ar[r] \ar[d] &  H^*_{GL_n}(SL_n, \mathbb{Q})\ar[d]_{\kappa}\\
H^*\big((\mathbb{C}^*\times\mathbb{C})^{[n]},\mathbb{Q}\big)\ar[r] &  H^*(\mathsf{M}/\!\!/\!\!/ K, \mathbb{Q}).
}
\end{equation}
For later reference, we note one easy topological fact.   Consider the natural map $H^*_{GL_n}(GL_n, \mathbb{Q})\rightarrow H^*_{GL_n}(SL_n, \mathbb{Q})$ of equivariant cohomology groups associated to the adjoint-equivariant inclusion $SL_n\rightarrow GL_n$.
\begin{prop}
The homomorphism of $\on{Ad}$-equivariant cohomology
\bd
H^*_{GL_n}(GL_n, \mathbb{Q})\rightarrow H^*_{GL_n}(SL_n, \mathbb{Q}) 
\ed
is surjective.
\end{prop}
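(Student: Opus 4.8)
The plan is to pull the problem back along a convenient finite cover, on which everything becomes transparent. Write $\iota\colon SL_n\hookrightarrow GL_n$ for the adjoint‑equivariant inclusion. First I would introduce the multiplication map
\[
m\colon SL_n\times\Gm\longrightarrow GL_n,\qquad m(s,\lambda)=\lambda\cdot s,
\]
where $\Gm$ is realized as the group of scalar matrices, equipped with the trivial adjoint action. One checks at once that $m$ is a $GL_n$‑equivariant (for the conjugation actions) principal $\mu_n$‑bundle, with deck transformations $\zeta\cdot(s,\lambda)=(\zeta s,\zeta^{-1}\lambda)$ for $\zeta\in\mu_n$, and that these deck transformations commute with the $GL_n$‑action since $\mu_n$ is central. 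Passing to Borel constructions, $m$ becomes a finite Galois cover with group $\mu_n$, so the transfer homomorphism gives, with $\mathbb{Q}$‑coefficients, an isomorphism $m^*\colon H^*_{GL_n}(GL_n,\mathbb{Q})\xrightarrow{\ \sim\ }H^*_{GL_n}(SL_n\times\Gm,\mathbb{Q})^{\mu_n}$ onto the $\mu_n$‑invariants.

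Second, I would factor $\iota$ as $SL_n\xrightarrow{\,j\,}SL_n\times\Gm\xrightarrow{\,m\,}GL_n$ with $j(s)=(s,1)$, and note that the first projection $p\colon SL_n\times\Gm\to SL_n$ satisfies $p\circ j=\mathrm{id}_{SL_n}$; hence $\iota^*=j^*\circ m^*$. Given a class $\alpha\in H^*_{GL_n}(SL_n,\mathbb{Q})$, the pullback $p^*\alpha$ satisfies $j^*(p^*\alpha)=\alpha$, so as soon as $p^*\alpha$ lies in the $\mu_n$‑invariants it equals $m^*\beta$ for some $\beta$, whence $\iota^*\beta=j^*m^*\beta=\alpha$ and surjectivity follows. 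Because $p\circ(\zeta\cdot)=L_\zeta\circ p$, where $L_\zeta\colon SL_n\to SL_n$ is left translation by the central element $\zeta$, the required invariance of $p^*\alpha$ is a consequence of the identity $L_\zeta^*=\mathrm{id}$ on $H^*_{GL_n}(SL_n,\mathbb{Q})$ for every $\zeta\in\mu_n$. Everything above is formal; this last identity is the point on which the proof turns, and I expect it to be the only real obstacle.

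To establish it I would restrict to a maximal torus $T\subset GL_n$, say the diagonal one. The restriction map $H^*_{GL_n}(SL_n,\mathbb{Q})\to H^*_{T}(SL_n,\mathbb{Q})$ is injective — a standard fact for any action of a connected reductive group, coming from the transfer for the associated flag bundle, whose fibre has Euler characteristic $n!$ invertible over $\mathbb{Q}$ — and it intertwines $L_\zeta^*$ with the corresponding operator on $H^*_{T}(SL_n,\mathbb{Q})$ by naturality, since $L_\zeta$ is $T$‑equivariant. Thus it suffices to see $L_\zeta^*=\mathrm{id}$ on $H^*_{T}(SL_n,\mathbb{Q})$. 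Here the key observation is that the scalar matrix $\zeta\cdot\mathrm{Id}$ lies in the \emph{connected} group $T\cap SL_n$, its determinant being $\zeta^n=1$; choosing a path $\gamma(s)$ in $T\cap SL_n$ from $\mathrm{Id}$ to $\zeta\cdot\mathrm{Id}$, the formula $H(g,s)=\gamma(s)\,g$ defines a homotopy from $\mathrm{id}_{SL_n}$ to $L_\zeta$ that takes values in $SL_n$ (as $\det\gamma(s)=1$) and is $T$‑equivariant (as $\gamma(s)$ lies in the abelian group $T$ and hence commutes with it). Therefore $L_\zeta^*=\mathrm{id}$ on $H^*_{T}(SL_n,\mathbb{Q})$, and the proof concludes as above. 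The one place where care is genuinely needed is this homotopy: it works precisely because $\mathrm{Id}$ and $\zeta\cdot\mathrm{Id}$ can be joined inside $SL_n$ (through $T\cap SL_n$), even though they cannot be joined through the disconnected centre $\mu_n$ of $SL_n$.
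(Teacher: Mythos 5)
Your proof is correct, but it takes a genuinely different route from the paper's. The paper goes \emph{down} the central isogeny: it uses $p\colon SL_n\to PGL_n$, notes that $H^*(PGL_n,\mathbb{Q})\to H^*(SL_n,\mathbb{Q})$ is an isomorphism (over $\mathbb{C}$ both are the cohomology of the common Lie algebra), compares the Leray spectral sequences of the two Borel fibrations over $BGL_n$ to conclude that $p^*\colon H^*_{GL_n}(PGL_n,\mathbb{Q})\to H^*_{GL_n}(SL_n,\mathbb{Q})$ is an isomorphism, and then observes that $p^*$ factors through $H^*_{GL_n}(GL_n,\mathbb{Q})$, which forces the restriction map to be surjective. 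You instead go \emph{up} the isogeny $m\colon SL_n\times\Gm\to GL_n$: the transfer identifies $H^*_{GL_n}(GL_n,\mathbb{Q})$ with the $\mu_n$-invariants upstairs, and surjectivity reduces to showing that the central translations $L_\zeta$ act trivially on $H^*_{GL_n}(SL_n,\mathbb{Q})$, which you establish by restricting injectively to $T$-equivariant cohomology and running the homotopy $g\mapsto\gamma(s)g$ through the connected torus $T\cap SL_n$ --- correctly sidestepping the fact that no $GL_n$-equivariant homotopy exists, since the scalars inside $SL_n$ form the disconnected group $\mu_n$. The paper's argument is shorter, at the price of invoking isogeny-invariance of rational cohomology and a spectral-sequence comparison; yours avoids both, is more elementary, and proves a bit more along the way, namely the triviality of the central $\mu_n$-translation action on $H^*_{GL_n}(SL_n,\mathbb{Q})$ --- the same mechanism the paper exploits later on the quotient side, where the $\mu_n$-action extends to a connected $\mathbb{C}^*$-action on the Hilbert scheme. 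The supporting facts you use (equivariance of $m$ and of the deck action, the transfer isomorphism for the finite Galois cover of Borel constructions, injectivity of restriction to the maximal torus with $\mathbb{Q}$-coefficients, and equivariant homotopy invariance) are all standard and correctly applied.
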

\begin{proof}
We use the map $SL_n\xrightarrow{p} PGL_n$.  Over $\mathbb{C}$, $H^*(PGL_n, \mathbb{C})\rightarrow H^*(SL_n,\mathbb{C})$ is an isomorphism since both are identified with the cohomology of their common Lie algebra; hence $H^*(PGL_n,\mathbb{Q})\rightarrow H^*(SL_n,\mathbb{Q})$ is also an isomorphism.  This yields an isomorphism of $E_2$ pages for the Leray spectral sequences abutting to $p^*: H^*_{GL_n}(PGL_n,\mathbb{Q})\rightarrow H^*_{GL_n}(SL_n,\mathbb{Q})$, showing that $p^*$ is an isomorphism.  Since $p^*$ factors through $H^*_{GL_n}(GL_n,\mathbb{Q})\rightarrow H^*_{GL_n}(SL_n,\mathbb{Q})$, the conclusion follows.
\end{proof}

The right-hand vertical map $\kappa$ in \eqref{diagram} is the hyperk\"ahler Kirwan map \eqref{eq:HKK map} for our manifold $\mathsf{M}$.  Since the top horizontal map is surjective, if the hyperk\"ahler Kirwan map for $\mathsf{M}$ were surjective then the bottom horizontal arrow would be surjective.  We will show that the map 
\bd
H^*\big((\mathbb{C}^*\times\mathbb{C})^{[n]},\mathbb{Q}\big)\longrightarrow  H^*(\mathsf{M}/\!\!/\!\!/ K, \mathbb{Q})
= 
H^*\big((\on{det}\times\on{tr})\inv(1,0),\mathbb{Q}\big)
\ed
is not surjective.

To do this, we consider the $\mathbb{C}^*$-action on $(\mathbb{C}^*\times\mathbb{C})^{[n]}$ defined by scaling in the $\mathbb{C}$-factor, as in \cite{Grojnowski} or \cite{Nakajima}.  We use Chapter 7 of \cite{Nakajima} as our reference.  This action is {\em elliptic} in the sense used in \cite{BDMN}: that is, all downward flows converge, and thus one obtains a Bia\l ynicki-Birula (henceforth, BB) decomposition.  

More precisely, we abbreviate $N = (\mathbb{C}^*\times\mathbb{C})^{[n]}$.  Recall that 
$\on{Sym}^m(\mathbb{C}^*) \cong \mathbb{C}^*\times \mathbb{C}^{m-1}$: the first coordinate is the product of the $m$ elements of $\mathbb{C}^*$, and the remaining coordinates are (up to signs) the remaining elementary symmetric functions of the $m$ scalars.

For a partition $\lambda = 1^{\lambda_1} 2^{\lambda_2} 3^{\lambda_3}\dots$, we have the symmetric product
\begin{equation}\label{symmetric product}
S^\lambda(\mathbb{C}^*) = S^{\lambda_1}(\mathbb{C}^*)\times S^{\lambda_2}(\mathbb{C}^*)\times\dots
\cong \prod_{\lambda_i>0} (\mathbb{C}^*\times\mathbb{C}^{\lambda_i-1}).
\end{equation}
Then the fixed locus $N^{\mathbb{C}^*}$ is isomorphic to the disjoint union, 
\bd
N^{\mathbb{C}^*} \cong \bigsqcup_{\lambda: |\lambda| =n} S^\lambda(\mathbb{C}^*),
\ed 
and writing 
\bd
S_\lambda  = \big\{x\in N \; \big| \; \lim_{t\rightarrow 0} t\cdot x \in S^\lambda(\mathbb{C}^*)\big\},
\ed
we get $N = \bigsqcup_{\lambda} S_\lambda$.  Then 
\begin{equation}\label{BB decomp}
H^*(N, \mathbb{Q})\cong \bigoplus_{\lambda} H^*(S_\lambda)
\end{equation} (a BB decomposition; we ignore grading shifts).  If $\Gamma$ is any finite group acting by automorphisms of $N$ commuting with the $\mathbb{C}^*$-action, then $\Gamma$ acts naturally on the left-hand and right-hand sides of \eqref{BB decomp}, and the splittings involved in choosing a BB decomposition can be chosen $\Gamma$-equivariantly to make \eqref{BB decomp} an isomorphism of $\Gamma$-representations.

The $\mathbb{C}^*$-component of the ``center-of-mass'' map, i.e.,  $\det: N \rightarrow \mathbb{C}^*\times \mathbb{C} \xrightarrow{\pi_1} \mathbb{C}^*$, restricts to $S^\lambda(\mathbb{C}^*)$ as the projection on the product $\ds\prod_{i=1}^n \mathbb{C}^*$ of $\mathbb{C}^*$-factors of \eqref{symmetric product} followed by the map
\bd
\prod_{i=1}^n \mathbb{C}^* \rightarrow \mathbb{C}^*, \hspace{2em} (x_1,\dots, x_n) \mapsto x_1\cdot x_2^2 \cdot x_3^3 \cdot \dots\cdot x_n^n.
\ed

Now consider the action of the group $\Gamma = \mu_n$ of $n$th roots of unity, identified with the center of  $SL_n$, on $N$ by left multiplication on $SL_n$.  This action extends to an action of the connected group $\mathbb{C}^*$ on $GL_n$, hence on $N$, which thus acts trivially on $H^*(N)$.  Considering the $\Gamma$-action on 
$\mathsf{M}/\!\!/\!\!/ K$,
we find that, for $\lambda = (n) = (1^0 2^0 \dots n^1)$, we have that $\on{det}^{-1}(1)\cap S^{\lambda}(\mathbb{C}^*)$ is in natural bijection with $\Gamma$.  In other words: 
\begin{prop}
The set of length $n$ subschemes that are $\mathbb{C}^*$-fixed and have the form $\{\xi\}\times \on{Spec}\mathbb{C}[t]/(t^n)\subset \mathbb{C}^*\times\mathbb{C}$ for some $\xi\in\Gamma$, form a collection of connected components of
$\big((\on{det}\times\on{tr})^{-1}(1,0)\big)^{\mathbb{C}^*}$.
\end{prop}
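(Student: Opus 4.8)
The plan is to intersect the $\mathbb{C}^*$-fixed locus of $N=(\mathbb{C}^*\times\mathbb{C})^{[n]}$ with the fiber $(\det\times\on{tr})^{-1}(1,0)$ and read off the relevant connected components from the description $N^{\mathbb{C}^*}=\bigsqcup_{|\lambda|=n} S^\lambda(\mathbb{C}^*)$ recalled above. First I would note that $\det\times\on{tr}\colon N\to\mathbb{C}^*\times\mathbb{C}$ is $\mathbb{C}^*$-equivariant for the scaling action on the $\mathbb{C}$-factor: the coordinate $\det$ is $\mathbb{C}^*$-invariant, while $\on{tr}$ has $\mathbb{C}^*$-weight $1$. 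In particular $(1,0)$ is fixed in the target, so $(\det\times\on{tr})^{-1}(1,0)$ is $\mathbb{C}^*$-stable and
\[
\big((\det\times\on{tr})^{-1}(1,0)\big)^{\mathbb{C}^*}=(\det\times\on{tr})^{-1}(1,0)\cap N^{\mathbb{C}^*}.
\]
Moreover, since $\on{tr}$ has $\mathbb{C}^*$-weight $1$ it vanishes identically on $N^{\mathbb{C}^*}$ (equivalently: a $\mathbb{C}^*$-fixed subscheme is supported on $\mathbb{C}^*\times\{0\}$, so multiplication by the $\mathbb{C}$-coordinate is nilpotent on its structure sheaf and has trace zero). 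Hence the fixed locus above equals $\det^{-1}(1)\cap N^{\mathbb{C}^*}=\bigsqcup_{\lambda}\big(\det^{-1}(1)\cap S^\lambda(\mathbb{C}^*)\big)$.

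Next I would observe that each $S^\lambda(\mathbb{C}^*)\cong\prod_{\lambda_i>0}(\mathbb{C}^*\times\mathbb{C}^{\lambda_i-1})$ is irreducible, hence connected, so the $S^\lambda(\mathbb{C}^*)$ with $|\lambda|=n$ are precisely the connected components of $N^{\mathbb{C}^*}$; in particular each of them is open and closed in $N^{\mathbb{C}^*}$. For $\lambda=(n)=(1^0 2^0\cdots n^1)$ one has $S^{(n)}(\mathbb{C}^*)=S^1(\mathbb{C}^*)\cong\mathbb{C}^*$, whose points are exactly the length-$n$ subschemes $\{\xi\}\times\on{Spec}\mathbb{C}[t]/(t^n)$ for $\xi\in\mathbb{C}^*$. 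By the formula for the restriction of $\det$ to $S^\lambda(\mathbb{C}^*)$ recorded above (the sole $\mathbb{C}^*$-factor here is the one attached to the part of size $n$, and $\det$ is its $n$th power), $\det$ restricts to $S^{(n)}(\mathbb{C}^*)\cong\mathbb{C}^*$ as $\xi\mapsto\xi^n$. Therefore $\det^{-1}(1)\cap S^{(n)}(\mathbb{C}^*)=\mu_n=\Gamma$, a discrete set of $n$ points, each being the subscheme $\{\xi\}\times\on{Spec}\mathbb{C}[t]/(t^n)$ with $\xi\in\Gamma$.

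Finally I would conclude: since $S^{(n)}(\mathbb{C}^*)$ is open and closed in $N^{\mathbb{C}^*}$, the set $S^{(n)}(\mathbb{C}^*)\cap\big((\det\times\on{tr})^{-1}(1,0)\big)^{\mathbb{C}^*}=\Gamma$ is open and closed in $\big((\det\times\on{tr})^{-1}(1,0)\big)^{\mathbb{C}^*}$; being moreover a discrete set of $n$ points, each of its points is a connected component of $\big((\det\times\on{tr})^{-1}(1,0)\big)^{\mathbb{C}^*}$, which is the assertion. I do not expect a real obstacle here: the substantive input — the product description of $N^{\mathbb{C}^*}$ and the formula for $\det$ on its components — has already been assembled, and the only points that need a word of justification are the vanishing of $\on{tr}$ on $N^{\mathbb{C}^*}$ and the identification of the $S^\lambda(\mathbb{C}^*)$ as the connected components of $N^{\mathbb{C}^*}$, both of which are immediate.
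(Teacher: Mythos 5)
Your argument is correct and follows essentially the same route as the paper: intersect the recalled decomposition $N^{\mathbb{C}^*}\cong\bigsqcup_{\lambda}S^{\lambda}(\mathbb{C}^*)$ with the fiber of $\det\times\on{tr}$, use the stated formula for $\det$ on $S^{(n)}(\mathbb{C}^*)\cong\mathbb{C}^*$ to get $\xi\mapsto\xi^n$ with fiber $\mu_n=\Gamma$, and conclude these $n$ isolated points are connected components. The details you add beyond the paper's terse treatment (that $\on{tr}$ has $\mathbb{C}^*$-weight $1$ and hence vanishes on the fixed locus, and the open-and-closed argument) are exactly the right way to make the paper's ``in other words'' step explicit.
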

Since the action of $\Gamma$ on this set of components obviously freely cyclically permutes them, we find that the regular representation of $\Gamma$ appears as a subrepresentation of $H^*(\mathsf{M}/\!\!/\!\!/ K, \mathbb{Q})$, thus completing the proof of Theorem \ref{main thm}.\hfill\qedsymbol
\begin{remark}
As in Remark 7.6 of \cite{CNS}, the proof above actually shows (as asserted in Theorem \ref{main thm}) that the regular representation $\mathbb{Q}[\Gamma]$ actually appears in the {\em pure cohomology} (in the Hodge-theoretic sense) $\ds\bigoplus_k W_kH^k(\mathsf{M}/\!\!/\!\!/ K, \mathbb{Q})$: the Bia\l ynicki-Birula decomposition is compatible with Hodge weights, and we have identified the regular representation in the pure part of the cohomology of the $\mathbb{C}^*$-fixed locus.  
\end{remark}

\section{Appendix: Some Hermitian Linear Algebra}
This section proves an elementary result about families (over the interval $[0,1]\subset\mathbb{R}$) of self-adjoint operators on a finite-dimensional Hermitian vector space needed in the proof of Theorem \ref{kempf-ness intro}.  While much stronger results are available in the literature, we include a proof of what we need, to emphasize to the more algebro-geometrically inclined reader that no sophisticated real analysis is needed.

Fix a complex vector space $V$ of dimension $n$ with Hermitian metric $\langle \cdot,\cdot\rangle$.  Let $L\in \mathfrak{gl}(V)[t]$ be a polynomial map $\mathbb{A}^1_{\mathbb{C}}\rightarrow\mathfrak{gl}(V)$ for which $L(\mathbb{R})\subseteq i\mathfrak{u}(V)$, the space of self-adjoint operators on $V$.  
\begin{prop}\label{continuous diagonalization}
There exist continuous maps $v_i(t): \mathbb{R}\rightarrow V$, $i=1,\dots, n$, and continuous functions 
 $\eta_i(t): \mathbb{R}\rightarrow \mathbb{R}$
such that:
\begin{enumerate}
\item The vectors $v_1(t),\dots, v_n(t)$ form a $\mathbb{C}$-linear basis of $V$ for each $t\in \mathbb{R}$, orthogonal with respect to the Hermitian inner product. 
\item  $L(t) v_i(t) = \eta_i(t) v_i(t)$ for all $i$ and $t$.
\end{enumerate}
\end{prop}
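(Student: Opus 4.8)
The plan is to prove the statement by induction on $n = \dim V$, in fact for the slightly larger class of \emph{real-analytic} families $L(t)$ of self-adjoint operators on $V$ (over an open interval), which contains the polynomial families in question and which is what the induction naturally produces. The case $n = 1$ is trivial: $L(t)$ is a scalar, any constant unit vector is an eigenvector, and $\eta_1(t) := L(t)$ is continuous. For the inductive step I would first establish the \emph{local} assertion that every $t_0 \in \mathbb{R}$ has an interval neighborhood carrying a continuous orthonormal eigenframe of $L$, and then patch these together. Note that once a continuous orthonormal eigenframe $(v_i(t))$ is in hand, the eigenvalues are automatically continuous, since $\eta_i(t) = \langle L(t) v_i(t), v_i(t)\rangle$.

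For the local assertion, fix $t_0$. Suppose first that $L(t_0)$ is \emph{not} scalar, with distinct (real) eigenvalues $\mu_1 < \cdots < \mu_r$, $r \geq 2$. Choose pairwise disjoint circles $\gamma_c \subset \mathbb{C}$, with $\gamma_c$ enclosing $\mu_c$ and no other $\mu_{c'}$. Since the characteristic polynomial of $L(t)$ has coefficients continuous in $t$, for $t$ in a small interval $U \ni t_0$ the spectrum of $L(t)$ meets no $\gamma_c$; hence the Riesz projections $P_c(t) = \tfrac{1}{2\pi i}\oint_{\gamma_c}(z - L(t))\inv\, dz$ are real-analytic on $U$, and, $L(t)$ being self-adjoint, they are the orthogonal projections onto $L(t)$-invariant subspaces $W_c(t)$ with $V = \bigoplus_c W_c(t)$ an orthogonal decomposition, $\dim W_c(t) = k_c$ constant, $\sum_c k_c = n$, and each $k_c < n$. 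Applying Gram--Schmidt to $P_c(t) w_1, \dots, P_c(t) w_{k_c}$, for a fixed basis $w_\bullet$ of $W_c(t_0)$, yields (after shrinking $U$) a real-analytic orthonormal frame of $W_c(t)$; in this frame $L(t)|_{W_c(t)}$ is a real-analytic family of $k_c \times k_c$ self-adjoint matrices, so the inductive hypothesis supplies a continuous orthonormal eigenframe of it on $U$. Transporting these back and taking their union over $c$ gives a continuous orthonormal eigenframe of $L$ on $U$. If instead $L(t_0)$ \emph{is} scalar, put $\wt L(t) = L(t) - \tfrac{1}{n}\on{tr}(L(t))\, I$; this is real-analytic, self-adjoint on $\mathbb{R}$, has the same eigenvectors as $L(t)$, and vanishes at $t_0$. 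If $\wt L \equiv 0$ then $L$ is a scalar family and a constant orthonormal basis works; otherwise, since $\wt L$ is real-analytic and $\not\equiv 0$, it vanishes to some finite order $j \geq 1$ at $t_0$, and $M(t) = (t - t_0)^{-j}\wt L(t)$ is again real-analytic and self-adjoint on $\mathbb{R}$, with the same eigenvectors as $L$ away from $t_0$, while $M(t_0) \neq 0$ is traceless, hence \emph{not} scalar. The previous case applied to $M$ then produces a continuous orthonormal eigenframe near $t_0$, which is an eigenframe of $L$ for $t \neq t_0$ and (trivially, $L(t_0)$ being scalar) also at $t_0$.

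It remains to patch the local frames into one on all of $\mathbb{R}$. Taking a locally finite refinement of the cover by intervals and working outward from a central interval, one uses that any two continuous orthonormal eigenframes of $L$ on a connected interval, suitably relabeled, differ by a continuous transition with values in the compact \emph{connected} group $\prod_\mu U\!\left(\on{mult}_\mu L(t)\right) \subseteq U(V)$, and such a transition can be deformed to the identity across a collar; this lets one extend a chosen frame across one more interval at a time. I expect the main obstacle to be exactly the phenomenon that forces this inductive route rather than a direct construction: there is no canonical eigenvector inside a multidimensional eigenspace, and the naive recipe of following the eigenvector of ``the $k$-th largest eigenvalue'' is discontinuous precisely at eigenvalue crossings — already for $2\times 2$ families such as $\left(\begin{smallmatrix} t & t^2 \\ t^2 & -t \end{smallmatrix}\right)$, where the continuous eigenlines are those attached to the smooth branches $\pm t\sqrt{1 + t^2}$, not to the ordered eigenvalues — which is why the Riesz reduction and the trace-subtraction-and-division device are needed to reduce, around every point, to a strictly smaller self-adjoint family in which the obstruction disappears. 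The patching step itself is elementary but must be carried out with some care, since continuous eigenframes do not glue from local data the way sections of a sheaf do.
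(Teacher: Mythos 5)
Your local analysis is essentially Rellich's classical argument and is sound: the Riesz-projection splitting at a point where $L(t_0)$ is not scalar, the trace-subtraction-and-divide-by-$(t-t_0)^j$ device at scalar points (legitimate because the family is real-analytic, so the order of vanishing is finite), and the induction on $\dim V$ all work, and this is a genuinely different route from the paper's, which is algebraic: there one pulls the family back along the finite cover $\mathbb{A}^n\rightarrow \on{Sym}^n(\mathbb{A}^1)$ determined by the characteristic polynomial so that the eigenvalues become polynomial, takes saturated kernels of $(L-\eta_i)^n$ over $\mathbb{C}[u]$, uses freeness over a PID to get polynomial generalized eigenvector bases, converts these to genuine orthogonal eigenvectors over $\mathbb{R}$ via Gram--Schmidt and self-adjointness, and finishes with a continuous section of the ramified cover over the real points.

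The gap is in the patching, which you describe as elementary but which is in fact the remaining substantive point. The assertion that two continuous orthonormal eigenframes on an interval, suitably relabeled, differ by a continuous transition with values in $\prod_\mu U\big(\on{mult}_\mu L(t)\big)$ is not formal: the unitary $u(t)$ with $u(t)v_i(t)=w_i(t)$ commutes with $L(t)$ only if, for every $t$ and every $i$, the two frames attach the \emph{same} eigenvalue function to the index $i$, and a continuous eigenframe could a priori have ``hybrid'' eigenvalue functions that switch from one spectral branch to another across a degeneracy. For merely continuous self-adjoint families this really happens (if $L(t)$ is scalar on a subinterval, a frame may rotate there and exchange branches, and then no constant relabeling makes the transition eigenspace-preserving on the whole interval), so the claim cannot be purely soft; for your real-analytic $L$ it is true, but it needs an argument --- e.g.\ that the limiting eigenprojections of two distinct analytic eigenvalue branches at a crossing are orthogonal, so a unit eigenvector cannot pass continuously from one branch to the other. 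Moreover, even granting this, the group $\prod_\mu U\big(\on{mult}_\mu L(t)\big)$ varies with $t$ and jumps up at eigenvalue collisions, so ``deform the transition to the identity across a collar'' is not justified as stated; one must place the collar inside a subinterval free of collisions of distinct branches (such a subinterval exists because, by analyticity, the degeneracy locus is discrete), where the stabilizer is a fixed connected block subgroup and path-connectedness gives the interpolation. With those two points supplied the globalization goes through, but as written the patching step rests on an unproved (and, outside the analytic category, false) claim.
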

\begin{proof}
Write $C=\mathbb{A}^1_{\mathbb{C}}$ for the domain of the morphism $L$, with coordinate $t$.  
Taking the characteristic polynomial defines a polynomial map $\on{char}(L): C\rightarrow \on{Sym}^{n}(\mathbb{A}^1_{\mathbb{C}}) \cong \mathbb{A}^{n}_{\mathbb{C}}$.  
\begin{remark}\label{defined over R}
The ramified covering $c:\mathbb{A}^n_{\mathbb{C}} \rightarrow \on{Sym}^n(\mathbb{A}^1_{\mathbb{C}})$, as well as its restriction to every intersection of reflection hyperplanes, is defined over $\mathbb{R}$.
\end{remark}
There exists a finite subset $S\subset C$ such that $L|_{C\smallsetminus S}$ has a constant number of distinct (generalized) eigenvalues and the set of their multiplicities is constant. We now pass to a finite covering $\wt{C}  = \mathbb{A}^1_{\mathbb{C}}\xrightarrow{\pi} \mathbb{A}^1_{\mathbb{C}} = C$, $\pi(u) = t$, obtained by pulling back $c$ along $\on{char}(L)$.  The covering $\pi$ is ramified at most over $S$, and there are polynomials $\eta_i(u): \wt{C}\rightarrow\mathbb{C}$, $i=1,\dots, n$  giving the generalized eigenvalues of $L(u) := L(\pi(u))$.   Write $\wt{S} = \pi\inv(S)$.
Let $\ds D = \prod_{p\in \wt{S}}(u-p)$ and let $R = \mathbb{C}[u][D\inv]$.

Each linear operator $L_i = (L(u)-\eta_i(u)\on{Id})^n$ has constant rank $r_i$ as a function of $u\in \wt{C}\smallsetminus \wt{S}$, and thus $K_i := \on{ker}\big(L_i(u): V\otimes R\rightarrow V\otimes R\big)$ is a projective $R$-submodule of $V\otimes R$ of rank $n-r_i$.  Such a submodule is of the form $K_i = \overline{K}_i\otimes_{\mathbb{C}[u]} R$ for a submodule $\overline{K}_i\subset V[u]$ uniquely determined by the properties that $K_i = \overline{K}_i\otimes_{\mathbb{C}[u]} R$ and that $V[u]/\overline{K}_i$ is torsion-free.
By the classification of modules over a PID, we may choose an isomorphism $\overline{K}_i\cong \mathbb{C}[u]^{n-r_i}$, thus yielding a basis of $\overline{K}_i$; since every element $b(u)$ of this basis satisfies $(L(u) - \eta_i(u))^nb(u) = 0$ for $u\in \wt{C}\smallsetminus \wt{S}$, $b(u)$ is a generalized $\eta_i(u)$-eigenvector over all of $\wt{C}$.

Repeating the previous paragraph for all $\eta_i(u)$, we thus get a basis $w_1(u), \dots, w_n(u) \in V[u]$ so that $(L(u) - \eta_i(u))^nw_i(u) =0$ for $i=1,\dots, n$.  Apply Gram-Schmidt to the basis $\{w_i(u)\}$ to obtain an orthogonal basis that depends polynomially on $u$ and $\overline{u}$; we write $\{v_i(u,\overline{u})\}$ for this basis, and $\{v_i(u)\}$ for the basis restricted to $u\in\mathbb{R}$, which depends polynomially on $u\in\mathbb{R}$.    Since $L(u)$ is self-adjoint for $u\in \mathbb{R}$, we have that $L(u) v_i(u) = \eta_i(u)v_i(u)$ for all $u\in\mathbb{R}$.  

Finally, the ramified cover $\wt{C}\rightarrow C$, restricted to the real curve $\on{char}(L)(t)$, $t\in \mathbb{R}$, has a continuous section; pulling back the $v_i(u)$ and $\eta_i(u)$ gives the claimed assertion.
\end{proof}

\end{document}